\newcommand{\cU}{\mathcal{U}}
\newcommand{\R}{\mathbb{R}}
\newcommand{\X}{\mathcal{X}}
\newcommand{\rmp}{RMP\xspace}
\newcommand{\selection}{\textsc{Selection}\xspace}
\newcommand{\tsp}{\textsc{TSP}\xspace}
\newcommand{\mro}{\textsc{MRO}\xspace}
\newcommand{\midpt}{\textsc{MID}\xspace}
\newtheorem{theorem}{Theorem}
\newtheorem{assumption}[theorem]{Assumption}
\newcommand{\algru}{RU\xspace}
\newcommand{\algex}{MRO-Ex\xspace}
\newcommand{\algmid}{Mid\xspace}
\newcommand{\algheu}{MRO-Heu\xspace}
\newcommand{\algldr}{MRO-LDR\xspace}
\newcommand{\algcg}{MRO-CG\xspace}
\newcommand{\algls}{MRO-LSHeu\xspace}
\newcommand{\hiro}{\textsc{Hiro}\xspace}
\newcommand{\BIGOP}[1]{\mathop{\mathchoice%
{\raise-0.22em\hbox{\huge $#1$}}%
{\raise-0.05em\hbox{\Large $#1$}}{\hbox{\large $#1$}}{#1}}}
\begin{document}

\title{Generating Hard Instances for Robust Combinatorial Optimization}
\author[1]{Marc Goerigk\thanks{Corresponding author. Email: marc.goerigk@uni-siegen.de}}
\affil[1]{Network and Data Science Management, University of Siegen, Germany}

\author[2]{Stephen J. Maher}
\affil[2]{Department of Management Science, Lancaster University, United Kingdom}

\date{}

\maketitle

\abstract{While research in robust optimization has attracted considerable interest over the last decades, its algorithmic development has been hindered by several factors. One of them is a missing set of benchmark instances that make algorithm performance better comparable, and makes reproducing instances unnecessary. Such a benchmark set should contain hard instances in particular, but so far, the standard approach to produce instances has been to sample values randomly from a uniform distribution.

In this paper we introduce a new method to produce hard instances for min-max combinatorial optimization problems, which is based on an optimization model itself. Our approach does not make any assumptions on the problem structure and can thus be applied to any combinatorial problem. Using the \selection and \textsc{Traveling Salesman} problems as examples, we show that it is possible to produce instances which are up to 500 times harder to solve for a mixed-integer programming solver than the current state-of-the-art instances.}

\textbf{Keywords:} robustness and sensitivity analysis; robust optimization; problem benchmarking; problem generation; combinatorial optimization

\section{Introduction}

We consider (nominal) combinatorial optimization problems of the form
\[ \min_{\pmb{x}\in\X} \pmb{c}\pmb{x} \]
where $\X\subseteq\{0,1\}^n$ denotes the set of feasible solutions, and $\pmb{c}\in\R^n_+$ is a cost vector. For the case that the cost coefficients $\pmb{c}$ are not known exactly, robust optimization approaches have been developed. In the most basic form, we assume a discrete set $\cU=\{\pmb{c}^1,\ldots,\pmb{c}^N\}$ of possible costs to be given, the so-called uncertainty set. Depending on the problem application, $\cU$ may be found by sampling from a distribution, or by using past observations of data. The robust (min-max) problem is then to solve
\[ \min_{\pmb{x}\in\X} \max_{\pmb{c}\in\cU} \pmb{c} \pmb{x} \]
This type of problem was first introduced in \cite{KouYu97}, and several surveys are now available, see \cite{Aissi2009,goerigk2016algorithm,kasperski2016robust}. The robust problem turns out to be NP-hard for all relevant problems that have been considered so far, even for $N=2$. This is also the case if the nominal problem is solvable in polynomial time, for example the \textsc{Shortest Path} or the \textsc{Assignment} problem \cite{kasperski2016robust}.

However, practical experience tells us that an NP-hard problem can sometimes still be solved sufficiently fast for relevant problem sizes. In fact, where NP-hardness proofs typically rely on constructing problem instances with specific properties, nothing is known about hardness of randomly generated instances, or smoothed analysis, in robust optimization. Where the related min-max regret problem has sparked research into specialized solution algorithms (see, e.g., \cite{catanzaro2011reduction,pereira2011exact,kasperski2012tabu}), little such research exists for the min-max problem, as simply using an off-the-shelf mixed-integer programming solver, such as CPLEX, can already lead to satisfactory results.

Faced with a similar situation for nominal knapsack problems, \cite{pisinger2005hard} asked: ``Where are the hard knapsack problems?'' The related aim of this paper is to construct computationally challenging robust optimization problems. To this end, we consider the \selection problem, where
$ \X = \left\{\pmb{x}\in\{0,1\}^n : \sum_{k=1}^n x_k = p \right\}$,
and the \textsc{Traveling Salesman} problem (\tsp) as examples. The nominal problem of the former can be solved in polynomial time, while it is NP-hard for the latter.
However, the proposed methods are general and can be applied to any robust combinatorial problem. 

Looking into other fields of optimization problems, instance libraries have been a main driver of algorithm development \cite{muller2010algorithm}. Examples include MIPLIB~\cite{koch2011miplib} for mixed-integer programs, road networks from the DIMACS challenge for \textsc{Shortest Path} problems \cite{demetrescu2009shortest} or the Solomon instances for the vehicle routing problem with time windows\cite{solomon1987}. There is a clear gap in robust optimization, where instance generators often need to be re-implemented to reproduce previous results. Our research is intended as a first step towards a library of hard instances to guide future research. Both our benchmark set of instances and the code to generate them are published on a website dedicated to this purpose, \url{www.robust-optimization.com}.

As there is no free lunch in optimization, we cannot hope to construct instances that are hard for all possible optimization algorithms. We therefore avoid constructing instances that are hard for a particular solution method (e.g., using CPLEX), but rather aim at maximizing hypothetical measures of hardness. Whether or not they actually correspond to harder instances for the solver is then a matter of computational experiments.

Our focus is to find an uncertainty set such that the optimal objective value of the resulting robust problem is as large as possible (Section~\ref{sec:1}). To solve the resulting optimization problem, Section~\ref{sec:solutionApproaches} considers several exact and heuristic solution methods. We briefly discuss our software package for instance generation in Section~\ref{sec:software}, before comparing solution approaches and the hardness of the resulting instances in Section~\ref{sec:exp}. We find that it is possible to construct instances that are considerably harder to solve than i.i.d. uniformly sampled problems---the current standard approach. Section~\ref{sec:conclusions} concludes the paper and points out further avenues for research.

\section{An Optimization Model for Maximizing the Robust Objective Value}\label{sec:1}

This paper proposes the use of an optimization problem to construct hard problem instances.
Throughout this section the proposed model is presented along with a number of different solution techniques.
In the presentation of the model and related discussions, the vectors and matrices are written in bold font, for example $\pmb{x} = (x_1,\ldots,x_n)$, and for sets $\{1,\ldots,n\}$ the shorthand notation $[n]$ is used.

Let some problem instance with $N$ scenarios be given, represented through the scenario objective coefficient vectors $\tilde{\pmb{c}}^1,\ldots,\tilde{\pmb{c}}^N$, with $\tilde{\pmb{c}}^i\in\R^n_+$. From this initial instance, the goal is to modify the inputs in such a way that the resulting robust problem is harder to solve. The approach that is used in this paper is to modify the values of the cost vectors in each of the scenarios. However, the base problem is to be modified, and not completely changed, so a limit on the magnitude of the change for each cost value is imposed.

Consider a scenario $i\in[N]$, which is a vector of cost coefficients denoted by $\tilde{\pmb{c}}^{i}$. The modification of the problem involves the selection of cost coefficients from the set of all possible candidate values, which is denoted by $\cU_i$. In the approach proposed in this paper, the set $\cU_{i}$ is defined as
\[ \cU_i = \left\{ \pmb{c}\in\R^n_+ : c_k\in[\underline{c}^i_k,\overline{c}^i_k]\  \forall k\in[n], \sum_{k\in[n]} c_k = \sum_{k\in[n]} \tilde{c}^i_k \right\} \]
where $\underline{c}^i_k$ and $\overline{c}^i_k$ denote the lower and upper bounds, respectively, on the cost coefficient $k$. Additionally, $\cU_{i}$ imposes the constraint that the sum of coefficients for this scenario remains the same, but any feasible sum that respects the upper and lower bounds is permitted as a scenario vector. We will use $\underline{c}^i_k = \max\{\tilde{c}^i_k-b,0\}$ and $\overline{c}^i_k = \min\{\tilde{c}^i_k+b,C\}$ with a budget parameter $b$ and a global maximum cost coefficient $C$.

Our approach aims at finding scenarios $\pmb{c}^i\in\cU_i$ for all $i\in[N]$, so that the objective value of the optimal solution to the corresponding robust optimization problem is increased. This approach can be formulated as the following optimization problem
\begin{equation}
  \max_{\pmb{c}^i\in\cU_i \forall i\in[N]} \min_{\pmb{x}\in\X} \max_{j\in[N]} \pmb{c}^j\pmb{x} \tag{\mro}
  \label{eqn:mro}
\end{equation}
where \mro stands for ``maximize robust objective''. 
The intuition behind the proposed optimization problem for generating difficult robust problem instances is the following: For each $\pmb{x}\in\X$, the objective $\max_{j\in[N]} \pmb{c}^j \pmb{x}$ is a piecewise linear, convex function in $\pmb{c}^1,\ldots,\pmb{c}^N$. By maximizing the smallest value of the objective over all $\pmb{x}$, we spread out the solution costs, balancing the objective values of the best solutions in $\X$. This way, finding and proving optimality of the best $\pmb{x}$ becomes a more difficult task for an optimization algorithm. Naturally, whether the instances produced using the proposed method are actually more difficult to solve than the original problem $\tilde{\pmb{c}}^1,\ldots,\tilde{\pmb{c}}^N$ can only be tested computationally.

As an example, consider a robust variant of the \selection problem where the task is to choose two out of four items such that the maximum costs over two scenarios are as small as possible. The cost vectors for these two scenarios are
\begin{center}
\begin{tabular}{r|rrrr}
 & \multicolumn{4}{c}{item} \\
 & 1 & 2 & 3 & 4\\
\hline
$\pmb{c}^1$ & 4 & 1 & 9 & 2 \\
$\pmb{c}^2$ & 4 & 7 & 4 & 4
\end{tabular}
\end{center}
In this small example there are $\binom{4}{2} = 6$ possible solutions. For this particular instance of the robust \textsc{selection} problem there is only one optimal solution to this problem, which is to choose items 1 and 4 with a robust objective value 8. The sorted vector of the corresponding six robust objective values is
\[ (8,11,11,11,11,13) \]
Now let us assume that $\pmb{c}^1 \in \cU_{1}$ and $\pmb{c}^2 \in \cU_{2}$ and the budget is given by $b=1$. Thus, two alternative cost vectors $\hat{\pmb{c}}^1 \in \cU_{1}$ and $\hat{\pmb{c}}^2 \in \cU_{2}$ are
\begin{center}
\begin{tabular}{r|rrrr}
 & \multicolumn{4}{c}{item} \\
 & 1 & 2 & 3 & 4\\
\hline
$\hat{\pmb{c}}^1$ & 3 & 2 & 10 & 1 \\
$\hat{\pmb{c}}^2$ & 5 & 6 & 3 & 5
\end{tabular}
\end{center}
Given these cost vectors, the objective value of the optimal robust solution increases to 10. The optimal solution still remains as the selection of 1 and 4, but the sorted vector of robust objective values has become
\[ (10,11,11,11,12,13) \]
An important observation is that the difference between the best and second-best solutions has reduced. This can have the effect of increasing the difficulty of proving optimality. As mentioned previously, the difficulty of the instance can only be evaluated computationally. Using CPLEX to solve the min-max robust \textsc{selection} problem given by this small example, the first instance takes 0.013 ticks of the deterministic clock, whereas the second instances is solved in 0.209 deterministic ticks---more than 16 times as long.

\section{Solution Approaches}
\label{sec:solutionApproaches}

A clear drawback of \mro is that the inner problem is the robust optimization problem that we are attempting to make hard. Therefore, constructing a hard problem is at least as hard as actually solving it. Due to this fact we primarily focus on producing hard, but relatively small instances. This is an alternative to the trivial approach to producing hard instances, which is to produce larger ones. In the last part of this section we shift our focus to the generation of large and hard robust optimization instances. This is achieved through the use of heuristic methods to solve the \mro.

Note that even evaluating the objective value of some fixed scenario variables $\pmb{c}^1,\ldots,\pmb{c}^N$ is NP-hard for all commonly considered combinatorial problems (see \cite{kasperski2016robust}), as they are equivalent to solving a robust counterpart. Indeed, in Appendix~\ref{app:complexity} we show that \mro is $\Sigma^p_2$-complete when scenarios can be chose from polyhedral sets.

In the outer maximization problem, we determine $N$ vectors, and choose one of these vectors in the inner maximization problem. Formally, this is similar to the $K$-adaptability approach in robust optimization (see \cite{buchheim2017min}), which uses a min-max-min structure. Whereas their combinatorial part is in the outer minimization, the combinatorial part is in the inner minimization in our problem.

To address the difficulty of \mro, different solution approaches are developed. Each of the solution approaches aim to reduce the difficulty of solving \mro through alternative techniques. These approaches are:
\begin{itemize}
  \item Iterative method (Section~\ref{sec:it}): an exact approach that exploits the multi-level structure of \mro.
  \item Column generation method (Section~\ref{sec:cg}): an exact approach that applies decomposition to a relaxation of \mro.
  \item Alternating heuristic (Section~\ref{sec:alternate}): a heuristic applied to a reformulation of \mro.
  \item Linear decision rules (Section~\ref{sec:ldr}): a heuristic method to find a compact formulation of \mro.
  \item Replacing the inner subproblem with a heuristic (Section~\ref{sec:large}): a method for larger \mro problems.
\end{itemize}
A description of each of the solution approaches is presented in the following sections. The experimental results in Section \ref{sec:exp} then demonstrate the value of each approach.

\subsection{Iterative Solution}\label{sec:it}

Given the multi-level structure, it is difficult to solve \mro directly using general purpose solvers. However, decomposition techniques can be used to exploit this structure and to develop an effective solution approach.

Note that we can write the inner maximization problem for given $\pmb{c}^i$, $i\in[N]$, and $\pmb{x}\in\X$ by introducing a variable vector $\pmb{\lambda}$ representing the choice of scenario:
\[ \max_{j\in[N]} \pmb{c}^j\pmb{x} = \max \left\{ \sum_{i\in[N]} \lambda_i \pmb{c}^i \pmb{x} : \sum_{i\in[N]} \lambda_i = 1, \ \lambda_i \in \{0,1\}\ \forall i\in[N] \right\} \]

Let us now assume that some set $\{\pmb{x}^1,\ldots,\pmb{x}^K\}\subseteq\X$ of candidate solutions are already known. Then, the restricted \mro problem on this set can be written as
\begin{equation}
  \begin{aligned}
  \max\ & t \\
  \text{s.t. } & t \le (\sum_{i\in[N]} \lambda^j_i \pmb{c}^i) \pmb{x}^j & \forall j\in[K] \\
  & \sum_{i\in[N]} \lambda^j_i = 1 & \forall j\in[K] \\
  & \pmb{c}^i \in \cU_i & \forall i\in[N] \\
  & \lambda^j_i \in \{0,1\} & \forall i\in[N], j\in[K]
  \end{aligned}
  \label{eqn:reformulatedMRO}
\end{equation}
where the variables $\pmb{\lambda}^j$ for each $j\in[K]$ are used to determine the scenario $\pmb{c}$ that is assigned to each candidate $\pmb{x}^j$. We refer to this problem also as the master problem.

Note that problem \eqref{eqn:reformulatedMRO} is nonlinear through the product of $\pmb{\lambda}$ and $\pmb{c}$ variables, which can be linearized using variables $d_{ijk} = \lambda^j_i c^i_k$. The resulting model is then given as
\begin{equation}
  \begin{aligned}
  \max\ & t \\
  \text{s.t. } & t \le \sum_{i\in[N]}\sum_{k\in[n]} d_{ijk} x^j_k & \forall j\in[K] \\
  & \sum_{i\in[N]} \lambda^j_i = 1 & \forall j\in[K] \\
  & d_{ijk} \le c^i_k & \forall i\in[N],j\in[K],k\in[n]\\
  & d_{ijk} \le \overline{c}^i_k\lambda^j_i & \forall i\in[N],j\in[K],k\in[n] \\
  & \pmb{c}^i \in \cU_i & \forall i\in[N] \\
  & \lambda^j_i \in \{0,1\} & \forall i\in[N], j\in[K]
  \end{aligned}
  \label{eqn:linearisedReformulatedMRO}
\end{equation}

Once the master problem is solved for a fixed set of candidate solutions, we have determined an upper bound on the \mro problem. By solving the resulting robust optimization problem for $\pmb{x}$, we also construct a lower bound. If both bounds are not equal, we add the current robust solution $\pmb{x}$ to the set of candidate solutions and repeat the process by solving the master problem. This iterative approach will converge after a finite number of steps, as $\X$ contains a finite number of solutions. It is therefore an exact solution approach to \mro.

An interesting question is whether the master problem is solvable in polynomial time. Note that for $N$ scenarios and $K$ solutions, there are $N^K$ possibilities to assign solutions to scenarios. For each assignment, constructing optimal scenarios $\pmb{c}$ can be done in polynomial time by solving a linear program. This means that if $K$ is constant, the master problem can be solved in polynomial time as well.

If $K$ is unbounded, however, the problem becomes hard, as the following theorem shows.

\begin{theorem}
The master problem is NP-hard, if $K$ is part of the input.
\end{theorem}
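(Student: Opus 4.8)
The plan is to prove NP-hardness by a polynomial reduction from \textsc{Set Cover}, exploiting the observation already made in the text: for a \emph{fixed} assignment of the $K$ candidates to the $N$ scenarios, the remaining problem of choosing the cost vectors $\pmb{c}^i$ is a linear program and hence solvable in polynomial time. Thus all the difficulty must lie in the combinatorial choice of the assignment $\pmb{\lambda}$. I would therefore design an instance in which deciding whether the optimal value of the master problem reaches a fixed threshold is equivalent to deciding whether a universe can be covered by a bounded number of sets.

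Concretely, given a \textsc{Set Cover} instance with universe $U=\{u_1,\dots,u_K\}$, a family $\mathcal{F}=\{F_1,\dots,F_M\}$, and a bound $N$, I would introduce one candidate $\pmb{x}^j$ per element $u_j$ and one cost coordinate per set $F_\ell$, so that $n=M$ and the support of $\pmb{x}^j$ is $S_j=\{\ell : u_j\in F_\ell\}$. Each of the $N$ scenarios is made identical, with $\cU_i$ equal to the unit simplex (sum of coefficients fixed to $1$, every box interval $[0,1]$); in the paper's parametrisation this is obtained, e.g., from $\tilde{\pmb{c}}^i=\pmb{e}_1$ with $b=C=1$. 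Assigning candidate $j$ to scenario $i$ then contributes the value $\sum_{\ell\in S_j} c^i_\ell$, and the master problem asks us to split the candidates into $N$ groups and, in each group, to place a distribution over the sets that makes every assigned candidate's coverage as large as possible.

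The key lemma I would establish is that a group $G$ of candidates can be served with value $\ge 1$ in a scenario if and only if $\bigcap_{j\in G} S_j\neq\emptyset$, i.e.\ if and only if there is a single set $F_\ell$ containing all corresponding elements. One direction is immediate (put all unit mass on a common coordinate); for the converse, since the coefficients sum to $1$, demanding $\sum_{\ell\in S_j} c_\ell\ge 1$ for every $j\in G$ forces the whole mass onto $\bigcap_{j\in G} S_j$, which is impossible when that intersection is empty. Because the coverage of any candidate is at most $1$, the optimal value of the master problem is at most $1$, and it equals $1$ exactly when the candidates can be partitioned into $N$ groups each contained in some $F_\ell$ --- which is precisely a cover of $U$ by $N$ sets. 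This biconditional, together with the fact that the reduction is clearly polynomial and that $K$ (and $N$) are part of the input, yields the claim.

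I expect the main obstacle to be pinning down this equivalence cleanly rather than the bookkeeping. The subtle point is that the ``resource'' driving the problem is the \emph{overlap} of the candidate supports: with pairwise disjoint supports the value is governed only by the number of candidates per scenario (a trivial balancing problem), so the reduction must be engineered so that reaching the threshold forces candidates grouped together to share a common coordinate. Getting the simplex gadget and the threshold to enforce exactly this --- and checking the boundary cases (empty groups, and elements lying in no set of $\mathcal{F}$) --- is where I would concentrate the effort.
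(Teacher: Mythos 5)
Your proposal is correct and follows essentially the same route as the paper: the paper reduces from \textsc{Hitting Set} with one candidate per set, one cost coordinate per ground-set element, unit-simplex scenarios obtained via $b=C=1$, and the threshold-$1$ criterion ``a group of candidates can all attain value $1$ in one scenario iff their supports share a common coordinate''; your \textsc{Set Cover} reduction is the dual relabeling of exactly this construction. If anything, your mass-concentration argument for the key lemma (value $\ge 1$ forces all mass onto $\bigcap_{j\in G} S_j$) is tighter than the paper's case analysis, which informally equates ``no common coordinate'' with ``two candidates with disjoint supports.''
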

\begin{proof}
We use a reduction from \textsc{Hitting Set}, see \cite{ga1979computers}: Given a ground set $[E]$, a collection of sets $S_1,\ldots,S_T\subseteq[E]$, and some integer $L\le E$. Is there a subset $C\subseteq[E]$ with $|C|\le L$ such that $|C|\cap S_i\neq \emptyset$ for all $i\in[T]$?

Let an instance of \textsc{Hitting Set} be given. We set $n=E$, $N=L$ and $K=T$. We further set $b=C=1$, and $\tilde{c}_k = 1/n$ for each $k\in[n]$ (i.e., we get $\underline{c}_k = 0$ and $\overline{c}_k=1$ for all $k\in[n]$). Finally, we set $x^i_k = 1$ if $k\in S_i$ and $x^i_k=0$ otherwise.

We now claim that \textsc{Hitting Set} is a yes-instance if and only if there is a solution to \mro with objective value at least 1.

To prove this claim, let us first assume \textsc{Hitting Set} is a yes-instance. Let $C=\{e_1,\ldots,e_L\}$ be a corresponding subset of $[E]$ (w.l.o.g. we assume that $|C|=L$). Then we build a solution to \mro in the following way. For each $e_\ell\in C$, set $c^\ell_{e_\ell} = 1$ and $c^\ell_k = 0$ for all $k\neq e_\ell$. For each $S_i$, choose one $e_\ell \in S_i \cap C$ and set $\lambda^i_{\ell} = 1$ and all other $\lambda^i_k = 0$. Thus we obtain a feasible solution to \mro with objective value at least 1.

We illustrate this process with a small example. Let $E=\{1,\ldots,7\}$, $S_1 = \{1,2,3\}$, $S_2=\{3,4,5\}$, $S_3=\{6,7\}$, and $L=2$. Our \mro instance has the following values of $\pmb{x}^1,\pmb{x}^2,\pmb{x}^3$ and $\tilde{\pmb{c}}^1$ and $\tilde{\pmb{c}}^2$:
\begin{center}
\begin{tabular}{l|rrrrrrr}
$\pmb{x}^1$ & 1 & 1 & 1 & 0 & 0 & 0 & 0 \\
$\pmb{x}^2$ & 0 & 0 & 1 & 1 & 1 & 0 & 0 \\
$\pmb{x}^3$ & 0 & 0 & 0 & 0 & 0 & 1 & 1 \\
\hline
$\tilde{\pmb{c}}^1$ & $1/7$ & $1/7$ & $1/7$ & $1/7$ & $1/7$ & $1/7$ & $1/7$ \\
$\tilde{\pmb{c}}^2$ & $1/7$ & $1/7$ & $1/7$ & $1/7$ & $1/7$ & $1/7$ & $1/7$ \\
\hline
$\pmb{c}^1$ & 0 & 0 & 1 & 0 & 0 & 0 & 0 \\
$\pmb{c}^2$ & 0 & 0 & 0 & 0 & 0 & 1 & 0 
\end{tabular}
\end{center}
In the same table, we also show an optimal solution for $\pmb{c}^1$ and $\pmb{c}^2$. The $\pmb{\lambda}$ variables are chosen such that $\pmb{x}^1$ and $\pmb{x}^2$ are assigned to $\pmb{c}^1$, and $\pmb{x}^3$ is assigned to $\pmb{c}^2$.

Now let us assume that for some \textsc{Hitting Set} instance, we construct our \mro problem as detailed above and find an objective value of at least 1. We show that \textsc{Hitting Set} is a yes-instance. To this end, we first show that there exists an optimal solution to \mro where all $\pmb{c}^i_k$-variables are either 0 or 1. Consider any $\pmb{c}^i$, and let $\pmb{x}^{i_1},\ldots,\pmb{x}^{i_p}$ be all $\pmb{x}$-solutions assigned to scenario $i$. We distinguish two cases:
\begin{enumerate}
\item There exists some $s\in[n]$ such that $x^{i_j}_s=1$ for all $j\in[p]$. In this case, we can set $c^i_s = 1$.
\item There is no such $s\in[n]$, i.e., there are $x^{i_k}$ and $x^{i_\ell}$ with $k,\ell\in[p]$ that choose disjoint sets of items. As $\sum_{k\in[n]} c^i_k = 1$, at least one of them must have an objective value strictly less than 1, which contradicts our assumptions.
\end{enumerate}
We can thus set $C$ by including all elements $k\in[n]$ for which there is $i\in[N]$ with $c^i_k=1$. By construction, $C$ is a hitting set with cardinality at most $L$.

\end{proof}

While the iterative algorithm is an exact solution approach, there are limitations to its use. Specifically, solving the master problem can become a bottleneck to the solution process as the number of solutions $K$ increases. In each iteration of the algorithm, the addition of a new candidate $\pmb{x}$ results in an additional $2 + 2Nn$ constraints. Computationally, the additional constraints have a significant negative impact between consecutive iterations. Two different solution methods will be presented to address the issue in solving the master problem. A Dantzig-Wolfe decomposition approach will be presented in Section \ref{sec:cg} and an alternating heuristic will be described in Section \ref{sec:alternate}.

\subsection{Column Generation}\label{sec:cg}

Dantzig-Wolfe reformulation is applied to \eqref{eqn:linearisedReformulatedMRO} to decompose the problem into $K$ disjoint subsystems---one for each candidate solution $x$.
A column $p$ corresponds to a feasible assignment of a cost vector $\pmb{c}^{i} \in \cU_{i}$ to the solution vector $\pmb{x}^{j}$.
For a given column $p \in P^{j}$, the parameter $\overline{d}_{ikp}$ is the contribution of $c^{i}_{k}x^{j}_{k}$ to the objective of the inner minimization problem given the assignment of $\pmb{c}^{i}$ to solution vector $\pmb{x}^{j}$.
The variables $\alpha_{p}$ equal 1 if the cost vector assignment given by column $p$ is selected and 0 otherwise.
Finally, the variables $c^{i}_{k}$ are introduced to map the solution of the outer maximization problem to the set of cost vectors for the inner minimization problem.

The formulation of the column generation master problem is given by
\begin{equation}
  \begin{aligned}
  \max\ & t \\
  \text{s.t. } & t \le \sum_{p \in P^{j}}\sum_{i \in [N]}\sum_{k \in [n]}\overline{d}_{ikp}\alpha_{p} & \forall j \in [K] & \ (\gamma_{j}) \\
  & \sum_{p \in P^{j}} \sum_{p \in P^{j}}\alpha_{p} = 1 & \forall j \in [K] & \ (\delta_{j}) \\
  & \sum_{p \in P^{j}}\overline{d}_{ikp}\alpha_{p} \le c^{i}_{k} & \forall i \in [N], j \in [K], k \in [n] & \ (\pi_{ijk}) \\
  & \pmb{c}^i \in \cU_i & \forall i\in[N] \\
  & \alpha_{p} \in \mathbb{Z}_{+} & \forall j\in[K], p \in P^{j} &
  \end{aligned}
  \label{eqn:columnGenerationRMP}
\end{equation}

Initially, the master problem is formulated with only a subset of columns $\bar{P}^{j} \subseteq P^{j}$.
The corresponding problem is described as the restricted master problem (\rmp).
For each $j \in [K]$, a single initial column is included in $\bar{P}^{j}$, which is formed by assigning $\overline{c}_{k}$ to $x^{j}_{k}$.
The variables $\gamma_{j},\, \delta_{j}$ and $\pi_{ijk}$ represent the dual variables corresponding to the constraints in \eqref{eqn:columnGenerationRMP}.

A complicating aspect of the \rmp is the set of linking constraints given by the uncertainty sets $\cU_{i}$.
This complication arises from the fact that the constraints do not explicitly link the $\alpha_{p}$ variables, but an implicit linking of the $\alpha_{p}$ variables is through the third set of constraints in \eqref{eqn:columnGenerationRMP}.
While the uncertainty set linking constraints ensure that exactly one cost vector is selected from each scenario, this requirement could be overly restrictive in our contexts.
As such, a relaxation of \eqref{eqn:linearisedReformulatedMRO} is formed by replacing $\cU_{i}$ with $\cU^{j}_{i}$, where $\cU^{j}_{i} = \cU_{i}, \forall j \in [K]$, so that a different cost vector from scenario $i$ could be selected for each solution $j \in [K]$.
Applying this relaxation eliminates the linking constraints from the uncertainty sets $\cU_{i}$ and transfers the additional relaxed constraints to the column generation subproblems.

A column generation subproblem is formed for each solution $j \in [K]$.
Given the optimal dual solution to the \rmp, each column generation subproblem is solved to find a feasible cost vector assignment that has a positive reduced cost.
The dual variables are denoted by $\gamma_{j},\, \delta_{j}$ and $\pi_{ijk}$ respectively for the constraints of the \rmp.
Using an optimal dual solution---denoted by $(\overline{\gamma}_{j}, \overline{\delta}_{j}, \overline{\pmb{\pi}}_{j})$---the reduced cost of a column for solution $j$ is given by
\begin{equation}
  \overline{d}^{j} = \sum_{i \in [N]}\sum_{k \in [n]}d_{ijk} x^j_k \overline{\gamma}_{j} - d_{ijk} \overline{\pi}_{ijk} - \overline{\delta}_{j}
  \label{eqn:reducedCost}
\end{equation}
A feasible assignment of $\pmb{c}^{i} \in \cU^{j}_{i}$ to solution $\pmb{x}^{j}$ forms an improving column for the \rmp if \eqref{eqn:reducedCost} is positive.
The feasible cost vector assignment that forms a column with the most positive reduced cost is found by solving the subproblem given by
\begin{equation}
  \begin{aligned}
  \hat{d}^{j} = \max\ & \sum_{i \in [N]}\sum_{k \in [n]}d_{ik} x^j_k \overline{\gamma}_{j} - d_{ik} \overline{\pi}_{ijk} - \overline{\delta}_{j}\\
  \text{s.t. } & \sum_{i\in[N]} \lambda_i = 1 \\
  & d_{ik} \le c^i_k & \forall i\in[N],k\in[n]\\
  & d_{ik} \le \overline{c}_k\lambda_i & \forall i\in[N],k\in[n] \\
  & \pmb{c}^i \in \cU^{j}_{i} & \forall i\in[N] \\
  & \lambda_i \in \{0,1\} & \forall i\in[N]
  \end{aligned}  
\end{equation}

The optimal solution to \eqref{eqn:columnGenerationRMP} provides a scenario set that is expected to form a \emph{hard} robust optimization problem.
Since only a relaxation of \eqref{eqn:linearisedReformulatedMRO} is solved by this approach, objective function value will be greater than that found by the iterative approach (Section \ref{sec:it}).
However, in the proposed approach for generating hard instances, maximizing the minimum robust objective value is used only as a proxy for hardness.
As such, it is expected that even solving the relaxation of \eqref{eqn:linearisedReformulatedMRO} will provide instances that are of comparative hardness to the exact approach in Section \ref{sec:it}.

\subsection{Alternating Heuristic}\label{sec:alternate}

As an alternative to the relaxation and decomposition approach presented in Section \ref{sec:cg}, an alternating heuristic has been developed to solve the master problem \eqref{eqn:linearisedReformulatedMRO} of the iterative approach.
The alternating heuristic is motivated by the observation that for a given assignment of scenarios $i \in [N]$ to solutions $j \in [K]$, selecting the cost coefficients to maximize the minimum objective becomes a simple task.
Similarly, for a fixed set of cost coefficients for each scenario, the difficulty in assigning scenarios to solutions is greatly reduced.
As such, the alternating heuristic iterates between fixing either the scenario assignment or the scenario cost coefficients.

To formally present the alternating heuristic, first reconsider the master problem from the iterative method
\begin{align*}
\max\ & t \\
\text{s.t. } & t \le (\sum_{i\in[N]} \lambda^j_i \pmb{c}^i) \pmb{x}^j & \forall j\in[K] \\
& \sum_{i\in[N]} \lambda^j_i = 1 & \forall j\in[K] \\
& \pmb{c}^i \in \cU_i & \forall i\in[N] \\
& \lambda^j_i \in \{0,1\} & \forall i\in[N], j\in[K]
\end{align*}
for a subset $\{\pmb{x}^1,\ldots,\pmb{x}^K\}\subseteq\X$ of solutions. Let us assume the variables $\pmb{c}^i$ are all fixed. In that case, an optimal solution to the remaining $\pmb{\lambda}$ variables can be found through the following procedure: For each $j\in[K]$, choose one $i\in[N]$ such that $\pmb{c}^i\pmb{x}^j$ is not smaller than $\pmb{c}^\ell\pmb{x}^j$ for all $\ell\neq i$. Then, set $\lambda^j_i=1$ and all other $\lambda^j_\ell = 0$. To determine which $\pmb{c}^i$ is a maximizer of the objective value for some $\pmb{x}^j$, we can simply calculate all $N$ possible objective values. Thus, finding optimal $\pmb{\lambda}$ values is possible in $\mathcal{O}(nNK)$ time. Now let us assume that all $\pmb{\lambda}$ variables are fixed. In this case, the remaining variables are continuous. Under the assumption that the $\cU_i$ are polyhedra, the resulting problem can then be solved in polynomial time as well. This leads to the alternating heuristic described in Algorithm \ref{alg:alternate}.

\begin{algorithm}[h]
  \caption{Alternating heuristic to solve \eqref{eqn:reformulatedMRO}}
  \label{alg:alternate}
  \begin{algorithmic}[1]
    \REQUIRE set of solutions $\pmb{x}$, set of scenarios $\cU_{i}$, an initial set of cost coefficients $\tilde{\pmb{c}}$
    \ENSURE Cost coefficients $\hat{\pmb{c}}_{i}$ for each scenario $i \in [N]$
    \STATE Let $\hat{\pmb{c}} \leftarrow \tilde{\pmb{c}}$, $t \leftarrow -1$, $z_{t} \leftarrow 0$
    \REPEAT
      \STATE $t \leftarrow t + 1$
      \STATE fix the value of $\pmb{c}$ to $\hat{\pmb{c}}$ and solve \eqref{eqn:reformulatedMRO} for $\pmb{\lambda}$
      \STATE set $\hat{\pmb{\lambda}} \leftarrow \pmb{\lambda}$
      \STATE fix the value of $\pmb{\lambda}$ to $\hat{\pmb{\lambda}}$ and solve \eqref{eqn:reformulatedMRO} for $\pmb{c}$
      \STATE set $\hat{\pmb{c}} \leftarrow \pmb{c}$
      \STATE set $z_{t}$ to the current objective value of \eqref{eqn:reformulatedMRO}
    \UNTIL{$z_{t - 1} \geq z_{t}$}
  \end{algorithmic}
\end{algorithm}

\subsection{Linear Decision Rules}\label{sec:ldr}

A common reformulation of robust optimization problems involves the application of decision rules\cite{ben2004adjustable}.
This approach 
involves introducing the adjustable variables $\lambda^i: \{0,1\}^n \to [0,1]$ which map solutions $\pmb{x}$ to the worst-case scenario.
In the context of \mro, such a mapping would result in setting $\lambda^i(\pmb{x})=1$ if scenario $\pmb{c}^i$ is a worst-case scenario for solution $\pmb{x}$, and 0 otherwise.

Considering the \mro, the use of a decision rule results in an equivalent formulation given by
\begin{equation}
  \begin{aligned}
    \max\ & t\\
    \text{s.t. } & t \le \sum_{k\in[n]} \sum_{i\in[N]} \lambda^i(\pmb{x})c^i_k x_k & \forall \pmb{x}\in\X \\
    & \sum_{i\in[N]} \lambda^i(\pmb{x}) \le 1 & \forall \pmb{x}\in\X\\
    & \lambda^i: \{0,1\}^n \to [0,1] & \forall i\in[N] \\
    & \pmb{c}^i \in \cU_i & \forall i\in[N]
  \end{aligned}
  \label{eqn:mroOptimalDecisionRule}
\end{equation}

The optimal decision rule can only be found through the solution to the original robust optimization problem.
As such, it is common to apply approximations of the decision rules to find a closed form of the reformulated problem.
First-order or linear decision rules involve defining the vector mapping $\lambda^i(\pmb{x})$ as an affine linear function, such as
\[ \lambda^i(\pmb{x}) := \lambda^i_0 + \sum_{k\in[n]} \lambda^i_k x_k. \]
This introduces the new variables $\lambda^i_0$, $\lambda^i_k$ for all $k\in[n]$.
An approximation of \mro is given by substituting the linear function mapping in \eqref{eqn:mroOptimalDecisionRule}, resulting in the reformulation given by
\begin{align}
\max\ & t \label{ldrReformulationObj}\\
\text{s.t. } & t \le \sum_{k\in[n]} \sum_{i\in[N]} (\lambda^i_0 + \sum_{\ell\in[n]} \lambda^i_\ell x_\ell) c^i_k x_k & \forall \pmb{x}\in\X \label{con1}\\
& \sum_{i\in[N]} (\lambda^i_0 + \sum_{k\in[n]} \lambda^i_k x_k) \le 1 &\forall \pmb{x}\in\X  \label{con2}\\
& \lambda^i_0 + \sum_{k\in[n]} \lambda^i_k x_k \ge 0 & \forall i\in[N],\pmb{x}\in\X  \label{con3}\\
& \lambda^i_0 + \sum_{k\in[n]} \lambda^i_k x_k \le 1 & \forall i\in[N],\pmb{x}\in\X  \label{con4}\\
& \pmb{c}^i \in \cU_i & \forall i\in[N] \label{con5}
\end{align}
Note that it is possible to remove constraints \eqref{con4} since they are implied by constraints~\eqref{con2} and \eqref{con3}.

It can be observed that the reformulated problem has an exponential number of constraints, resulting from a set of constraints for each solution contained in $\X$.
As such, problem \eqref{ldrReformulationObj}--\eqref{con5} is intractable in its current form.
Using the following linear relaxation assumption,
a further reformulation can be performed to address the intractability of problem \eqref{ldrReformulationObj}--\eqref{con5}

\begin{assumption}\label{lpassumption}
There exists a suitable polyhedron
\[ \X' = \{ \pmb{x}\in\R^n_+ : \pmb{A}\pmb{x} \le \pmb{b}, \pmb{x} \le \pmb{1} \} \]
with $\pmb{A}\in\R^{m\times n}$, $\pmb{b}\in\R^m$, such that 
for any cost vector $\pmb{c}\in\R^n$, we have
\[ \min_{\pmb{x}\in\X} \pmb{c}\pmb{x} = \min_{\pmb{x}\in\X'} \pmb{c}\pmb{x}. \]
\end{assumption}

To apply Assumption~\ref{lpassumption}, each set of constraints in \eqref{con1}--\eqref{con3} are examined in turn to construct a polyhedral description of linear constraints.
For each set of constraints, the bounding limit is found by minimizing (maximizing) the activity for greater (less) than constraints.
Assumption~\ref{lpassumption} is given for a wide range of commonly considered robust combinatorial optimization problems, such as \selection, \textsc{Spanning Tree}, and \textsc{Assignment} (see also \cite{kasperski2016robust}). 

Consider any constraint of the form
\begin{equation}\label{exeq}
\pmb{c}\pmb{x} \le C \qquad \forall \pmb{x}\in\X
\end{equation}
for some vector $\pmb{c}$ and right-hand side $C$. 
This is equivalent to
\begin{equation}\label{consMax}
\max_{\pmb{x}\in\X} \pmb{c}\pmb{x} \le C.
\end{equation}
Using strong duality, this means that
\begin{equation}\label{consDual}
\min_{\pmb{u}\in D(\X')} \pmb{b}\pmb{u} \le C
\end{equation}
where $D(\X')$ denotes the set of dual feasible solutions and $\pmb{b}$ is the right-hand (left-hand) side of less (greater) than constraints in \eqref{consMax}. Due to weak duality, it is further sufficient to find some $\pmb{u}'\in D(\X')$ such that $\pmb{b}\pmb{u}' \le C$, which implies that Inequality~\eqref{exeq} is fulfilled. Analogously, for constraints of the form
\[\pmb{c}\pmb{x} \ge C \qquad \forall \pmb{x}\in\X\]
the original mathematical program that is dualized in the reformulation is
\[\min_{\pmb{x} \in \X}\pmb{c}\pmb{x} \ge C.\]
This reformulation approach is applied to Constraints~\eqref{con1}--\eqref{con3} to form a tractable problem.

For ease of presentation, we describe the reformulation 
using \selection as an example.
Consider Constraint~\eqref{con1}, which is equivalent to
\[ t \le \min_{\pmb{x}\in\X} \sum_{k\in[n]} (\sum_{i\in[N]} \lambda^i_0 c^i_k) x_k + \sum_{k\in[n]} \sum_{\ell\in[n]} (\sum_{i\in[N]} \lambda^i_\ell c^i_k) x_\ell x_k \]
First the product $x_\ell x_k$ is linearized by introducing a new variable $y_{kl}$. Then the resulting problem can be relaxed to form
\begin{align*}
\min\ & \sum_{k\in[n]} (\sum_{i\in[N]} \lambda^i_0c^i_k ) x_k + \sum_{\ell\in[n]} (\sum_{i\in[N]} \lambda^i_\ell c^i_k) y_{k\ell} \\
& 2 y_{k\ell} \le x_\ell + x_k & \forall \ell,k\in[n] \\
& x_k + x_\ell \le y_{k\ell} + 1 & \forall \ell,k\in[n] \\
& y_{k\ell} \ge 0 & \forall \ell,k\in[n] \\
& \sum_{i\in[n]} x_i = p \\
& x_k \le 1 & \forall k\in[n] \\
& x_k \ge 0 & \forall k\in[n] 
\end{align*}
Note that this will give a conservative approximation to Constraint~\eqref{con1}, as the minimum in the right-hand side is underestimated. Also, the right-hand side of \eqref{con1} is ignored when applying Assumption \ref{lpassumption}, since it will be enforced in the reformulation of \mro. By dualizing the problem, we find
\begin{align*}
\max\ & -\sum_{k\in[n]} \sum_{\ell\in[n]} \zeta_{k\ell} + p\eta - \sum_{k\in[n]} \theta_k \\
\text{s.t. } & \sum_{\ell\in[n]} (\xi_{k\ell} + \xi_{\ell k} - \zeta_{k\ell} - \zeta_{\ell k}) + \eta - \theta_k \le \sum_{i\in[N]} \lambda^i_0c^i_k & \forall k\in[n] \\
& -2\xi_{k\ell} + \zeta_{k\ell} \le \sum_{i\in[N]} \lambda^i_\ell c^i_k & \forall k,\ell\in[n] \\
& \xi,\zeta,\theta \ge 0 \\
& \eta \gtrless 0
\end{align*}
By strong duality, this model can be substituted for Constraint~\eqref{con1}.

Consider Constraint~\eqref{con2}:
\[ \max_{\pmb{x}\in\X} \sum_{i\in[N]} (\lambda^i_0 + \sum_{k\in[n]} \lambda^i_k x_k) \le 1 \]
The linear programming reformulation of this constraint is given by
\begin{align*}
\max \ & \sum_{i\in[N]} \lambda^i_0 + \sum_{k\in[n]} (\sum_{i\in[N]}  \lambda^i_k ) x_k \\
\text{s.t. } & \sum_{k\in[n]} x_k = p \\
& x_k \le 1 & \forall k\in[n] \\
& x_k \ge 0 & \forall k\in[n] 
\end{align*}
Same as for Constraint \eqref{con1}, the right-hand side is ignored when applying Assumption \ref{lpassumption}. The dual of this problem is given by
\begin{align*}
\min\ & \sum_{i\in[N]} \lambda^i_0 + p \pi + \sum_{k\in[n]} \rho_k \\
\text{s.t. } & \pi + \rho_k \ge \sum_{i\in[N]}  \lambda^i_k & \forall k\in[n]\\
& \pi \gtrless 0 \\
& \rho_k \ge 0 & \forall k\in[n]
\end{align*}

Finally, we use the same approach for Constraint~\eqref{con3}. For each $i\in[N]$, we have
\[ \min_{\pmb{x}\in\X} \lambda^i_0 + \sum_{k\in[n]} \lambda^i_k x_k  \ge 0 \]
for which the dual
is
\begin{align*}
\max\ & \lambda^i_0 + p\alpha^i - \sum_{k\in[n]} \beta^i_k \\
\text{s.t. } & \alpha^i - \beta^i_k \le \lambda^i_k & \forall k\in[n] \\
& \alpha^i \gtrless 0 \\
& \beta^i_k \ge 0 & \forall k\in[n]
\end{align*}

Putting the above discussion together, the linear decision rule approach to \mro is given through the following optimization problem:
\begin{align}
  \max\ & p\eta -\sum_{k\in[n]} \sum_{\ell\in[n]} \zeta_{k\ell}  - \sum_{k\in[n]} \theta_k \label{ldr:obj}\\
  \text{s.t. } & \sum_{\ell\in[n]} (\xi_{k\ell} + \xi_{\ell k} - \zeta_{k\ell} - \zeta_{\ell k})\nonumber\\
  &\qquad\qquad+ \eta - \theta_k \le \sum_{i\in[N]} \lambda^i_0 c^i_k & \forall k\in[n] \label{ldr:con1}\\
  & -2\xi_{k\ell} + \zeta_{k\ell} \le \sum_{i\in[N]} \lambda^i_\ell c^i_k & \forall k,\ell\in[n] \label{ldr:con2}\\
  & \sum_{i\in[N]} \lambda^i_0 + p \pi + \sum_{k\in[n]} \rho_k \le 1 \label{ldr:con3}\\
  & \pi + \rho_k \ge \sum_{i\in[N]}  \lambda^i_k & \forall k\in[n]\label{ldr:con4}\\
  & \lambda^i_0 + p\alpha^i - \sum_{k\in[n]} \beta^i_k \ge 0 & \forall i\in[N]\label{ldr:con5}\\
  & \alpha^i - \beta^i_k \le \lambda^i_k & \forall k\in[n],i\in[N] \label{ldr:con6}\\
  & \xi,\zeta,\theta \ge 0 \label{ldr:con7}\\
  & \eta \gtrless 0 \label{ldr:con8}\\
  & \pi \gtrless 0 \label{ldr:con9}\\
  & \rho_k \ge 0 & \forall k\in[n] \label{ldr:con10}\\
  & \alpha^i \gtrless 0 & \forall i\in[N]\label{ldr:con11}\\
  & \beta^i_k \ge 0 & \forall k\in[n],i\in[N] \label{ldr:con12}\\
  & \lambda^i_k \gtrless 0 & \forall i\in[N], k\in[n]\cup\{0\} \label{ldr:con13}\\
  &\pmb{c}^i \in\cU_i & \forall i\in[N] \label{ldr:con14}
\end{align}
The
reformulation of constraint \eqref{con1} is given by the objective \eqref{ldr:obj} and constraints \eqref{ldr:con1}--\eqref{ldr:con2}. For constraint \eqref{con2}, the reformulation is given by \eqref{ldr:con3}--\eqref{ldr:con4}. Note that the right-hand side of \eqref{con2} is the right-hand side of \eqref{ldr:con3}. Finally, the reformulation of \eqref{con3} is given by \eqref{ldr:con5}--\eqref{ldr:con6}. Similarly, the left-hand side of \eqref{con3} is the left-hand side of \eqref{ldr:con5}.

There is still a nonlinearity between variables $\lambda^i_\ell$ and $c^i_k$, with $\lambda^i_\ell$ being unbounded. We solve the optimization problem heuristically, using an alternating approach similar to Section~\ref{sec:alternate}. By fixing either variables $\pmb{\lambda}$, $\pi$, $\pmb{\rho}$, $\pmb{\alpha}$, $\pmb{\beta}$ or variables $\pmb{c}$, we increase the current objective value in each iteration, until a local optimum has been reached.

Note that while we described the reformulation for the special case of \selection, the same method can be used for any problem with Assumption~\ref{lpassumption}.

\subsection{Heuristics for Large Instances}
\label{sec:large}

A limitation of the previously presented approaches is that the exact solution of the resulting robust optimization subproblem is required. Therefore, it is not possible to find instances that take longer to solve than the generation time. This is especially true for the iterative methods, where it is likely that the robust optimization problem will be solved multiple times. While generating hard instances to small robust optimization problems is appropriate for most benchmarking purposes, this limitation prohibits the generation of hard large instances.

Fortunately, with only a small modification the \mro is still possible to generate hard instances to large robust optimization problems.
Instead of solving the \mro
\[ \max_{\pmb{c}^i\in\cU_i \forall i\in[N]} \min_{\pmb{x}\in\X} \max_{j\in[N]} \pmb{c}^j\pmb{x}  \]
exactly, we can use any heuristic to solve the robust problem $\min_{\pmb{x}\in\X} \max_{j\in[N]} \pmb{c}^j\pmb{x}$. Given the nature of the \mro and the iterative solution approaches, any approximation of heuristic approach for finding a solution that is a proxy for the optimum---such as solving the linear relaxation of the robust problem and then rounding the solution---can be used. The solution to the robust optimization problem generated by the heuristic algorithm need not belong to $\X$.

Algorithmically, we can use the iterative method, the column generation approach, and the alternating heuristic in combination with the heuristic by just replacing any robust optimization subproblem.

\section{Software} \label{sec:software}

The approaches developed in Section \ref{sec:solutionApproaches} have been implemented within the {\hiro} (Hard Instances for Robust Optimization) C++ software library.
This library has been designed to facilitate the generation of hard instances to any robust optimization problem.

The \hiro software library provides a virtual function, \texttt{solve\_ip()}, within the \texttt{HIRO} class so that the user can specify the solution methods of an inner optimization problem when creating a problem specific derived class.
In the examples presented in this paper, the inner optimization problems of \selection and \tsp have been defined as integer programs that are solved using CPLEX.
However, it is also possible to use combinatorial or heuristic algorithms to solve the inner problem.
The parameters of \texttt{solve\_ip()} are the number of elements in the cost vector(such as the number of items in \selection or the number of edges in the \tsp), the number of scenarios and a two-dimensional vector containing the cost vectors for each scenario.
The provided set of cost vectors define the current hard robust optimization instance.
After solving the robust optimization problem, the user must create a \texttt{HIROSolution} object that is returned to the \hiro core.
The \texttt{HIROSolution} object comprises a solution vector and the upper bound of that solution.

The \hiro software library is publicly available and can be found on GitHub\footnote{\url{https://github.com/}} in the repository \texttt{stephenjmaher/HIRO}.
The two optimization problems investigated in this paper, \selection and \tsp, are included as examples in the software repository.
Extensive instructions for adding new examples are also provided.

\section{Experiments}\label{sec:exp}

The ultimate goal of this paper is to develop general purpose methods for generating hard instances for min-max robust optimization problems. To this end, we proposed the \mro model and different solution methods.
While the theoretical basis of the proposed optimization problem is expected to produce min-max robust optimization instances harder than randomly generated instances, verifying this is only possibly through empirical studies.

In this paper, the hardness of an instance is primarily measured as the run time required to solve it using CPLEX.
Due to the empirical nature of this work, it is not immediately obvious what the most effective method for generating hard instances is.
Further, it is not possible to determine a priori which of the proposed algorithms will produce the most difficult instance.
As such, it is necessary to evaluate the performance of the algorithms presented in Section~\ref{sec:solutionApproaches} 
with respect to instance generation time and generated instance hardness.

\subsection{Setup}

The approaches presented in Section~\ref{sec:solutionApproaches} are general methods that can be applied to the generation of hard instances for any min-max robust optimization problem.
The alternative methods that have been developed focus specifically on the computation of cost coefficients for each scenario, which is the master problem in the proposed iterative methods.
As such, the inner problem---the subproblem---can be set to any min-max robust optimization problem.
To demonstrate the potential of the methods from Section~\ref{sec:solutionApproaches}, robust variants of the \selection problem and \tsp are used as the inner problem.
The \selection problem is used for its simplicity, meaning that the impact of the instance generation is more easily observed.
We also consider the \tsp to demonstrate the potential of using \mro to generate hard instances for a computationally more challenging optimization problem.
We used the standard subtour elimination formulation for the \tsp, with lazy generation of subtour elimination constraints.

The current state-of-the-art for robust optimization instance generation is to randomly sample scenarios.
Thus, the baseline for comparison is a set of instances where the scenario coefficients are sampled randomly uniform i.i.d. with $c^i_k \in \{0,\ldots,100=C\}$.
This method of instance generation will be labeled as \algru.
In the following results, the proposed methods will be labeled as follows:
\begin{itemize}
\item \algex: The exact method from Section~\ref{sec:it}.
\item \algcg: The column generation method that is applied to the relaxation of \mro as described in Section~\ref{sec:cg}.
\item \algheu: The alternating heuristic from Section~\ref{sec:alternate}.
\item \algldr: The approach from Section~\ref{sec:ldr} where linear functions are used to approximate the assignment of solutions to scenarios.
\item \algls: The approach from Section~\ref{sec:large}, where we round the fractional solution found by solving the linear relaxation of the subproblem (in case of \tsp without using subtour elimination constraints).
\end{itemize}

We consider four different computational experiments: In the first, we use relatively small \selection problems, where the robust subproblem is solved to optimality (i.e., the methods from Sections~\ref{sec:it} to \ref{sec:ldr}). Three problem sizes are used: The number of items $n$ is set to $20$, $30$, and $40$. In each case we set the number of items to be selected $p$ to $n/2$ and the total number of scenarios $N$ is set to $n$. For each problem size, we generate 100 instances using \algru. The scenarios from these random instances are then used as the initial scenarios for the iterative methods described above. To evaluate the effect of the uncertainty set budget $b$ on the run times of the iterative methods and the hardness of the generated instances, budgets of 1, 2 and 5 are used. The total number of randomly generated instances is $3\cdot 100 = 300$. Since these instances are used as an input to each of the iterative methods, a further $3\cdot 3 \cdot 4 \cdot 100 = 3600$ \emph{hard} instances are generated. For the second experiment, we considered larger problems with $n$ ranging from 20 to 70, and a budget $b=20$. In this experiment we only compare \algru with \algls.

A maximum run time of 3600 seconds is given to each of the iterative methods. This run time limit is only enforced between the iterations of the algorithm, as such it is possible for this time limit to be exceeded. 

Experiments three and four repeat experiments one and two for the \tsp.
In this setting, $n$ is the number of edges in the complete graph, and hence $\sqrt{n}$ is the number of nodes.
We first produce random instances with $100$, $144$, and $196$ edges and in experiment three apply the \mro and associated solution approaches to generate hard instances for these smaller-scale problems.
In experiment four, large problem instances are considered with the number of edges, $n$, ranging from 100 to 400.
For both experiments three and four, the number of scenarios is set to $N=\sqrt{n}$.

The hardness of the instances is evaluated by using CPLEX to solve the resulting min-max robust optimization problem.
All experiments were conducted using one core of a computer with an Intel Xeon E5-2670 processor, running at 2.60 GHz with 20MB cache, with Ubuntu 12.04 and CPLEX v12.6.

\subsection{Hard Instance Generation for \selection}
\label{sec:hardInstancesSelection}

\begin{table}[htbp]
\begin{center}
\begin{tabular}{r|r|rrr}
& & \multicolumn{3}{c}{$n=N$} \\
Budget & Method & 20 & 30 & 40 \\
 \hline
\multirow{4}{*}{1} & \algex & 0.04 & 0.61 & 7.83 \\
 & \algcg & 0.04 & 0.45 & 6.79 \\
 & \algheu & 0.04 & 0.43 & 6.91 \\
 & \algldr & 0.03 & 0.17 & 2.20 \\
\hline
\multirow{4}{*}{2} & \algex & 0.06 & 0.92 & 7.49 \\
 & \algcg & 0.05 & 0.62 & 9.34 \\
 & \algheu & 0.06 & 0.85 & 15.36 \\
 & \algldr & 0.03 & 0.25 & 3.07 \\
\hline
\multirow{4}{*}{5} & \algex & 0.08 & 0.77 & 10.59 \\
 & \algcg & 0.06 & 0.78 & 11.92 \\
 & \algheu & 0.08 & 4.29 & 22.00 \\
 & \algldr & 0.04 & 0.46 & 8.21 \\
\hline
 & \algru & 0.03 & 0.13 & 1.43
\end{tabular}
\caption{\selection: Average CPU time in seconds when solving the generated problems.}\label{tab:cpu-solve}
\end{center}
\end{table}

The ability of the \mro to produce robust optimization instances harder than randomly generated instances for \selection is clearly demonstrated in Table \ref{tab:cpu-solve}.
Interestingly, as the budget $b$ increases, the dominance of \algheu emerges.
In fact, with $b=5$ and $n=40$ \algheu produces instances that are more than twice as hard, on average, as those produced by \algex.

\begin{table}[htb]
\begin{center}
\begin{tabular}{r|r|rrrrrr}
& & \multicolumn{6}{c}{$n=N$} \\
Budget & Method & \multicolumn{2}{c}{20} & \multicolumn{2}{c}{30} & \multicolumn{2}{c}{40} \\
\hline
\multirow{4}{*}{1} & \algex & 1.7 & (3.7) & 4.7 & (9.9) & 6.9 & (15.5) \\
& \algcg & 1.7 & (3.4) & 3.6 & (8.1) & 5.2 & (10.6) \\
& \algheu & 1.7 & (5.1) & 3.5 & (7.4) & 5.3 & (14.8) \\
& \algldr & 1.1 & (2.3) & 1.4 & (2.4) & 1.7 & (5.8) \\
\hline
\multirow{4}{*}{2} & \algex & 2.4 & (4.2) & 8.2 & (23.9) & 6.7 & (40.8) \\
& \algcg & 2.0 & (5.0) & 5.0 & (13.6) & 7.9 & (46.3) \\
& \algheu & 2.2 & (4.0) & 6.5 & (19.4) & 12.8 & (59.5) \\
& \algldr & 1.2 & (2.5) & 2.1 & (6.8) & 2.4 & (8.2) \\
\hline
\multirow{4}{*}{5} & \algex & 4.0 & (11.3) & 7.2 & (23.0) & 10.0 & (49.3) \\
& \algcg & 2.4 & (5.0) & 6.8 & (21.7) & 10.6 & (53.0) \\
& \algheu & 3.6 & (13.3) & 37.6 & (179.4) & 23.6 & (152.9) \\
& \algldr & 1.5 & (3.3) & 4.1 & (12.2) & 7.8 & (49.4)
\end{tabular}
\caption{\selection: Average (maximum) time increase relative to \algru for each set of instances.}\label{sel:timeinc}
\end{center}
\end{table}

The results from Table~\ref{tab:cpu-solve} are complemented by Table~\ref{sel:timeinc}, which shows that average and maximum increase in hardness of all instances (i.e., the ratio between the computation time of the hardened and the original instance). We observe that for $n=40$, method \algheu produces instances that take on average 23 times longer to solve, and up to over 150 times in some cases.

\begin{table}[htb]
\begin{center}
\begin{tabular}{r|r|rrr}
& & \multicolumn{3}{c}{$n=N$} \\
Budget & Method & 20 & 30 & 40 \\
 \hline
\multirow{4}{*}{1} & \algex & 1.7 & 207.0 & 3005.8 \\
 & \algcg & 1.0 & 16.1 & 253.4 \\
 & \algheu & 0.2 & 4.6 & 151.9 \\
 & \algldr & 2.1 & 31.3 & 232.1 \\
\hline
\multirow{4}{*}{2} & \algex & 49.3 & 3238.1 & 3796.0 \\
 & \algcg & 2.9 & 56.3 & 707.5 \\
 & \algheu & 0.6 & 24.2 & 947.9 \\
 & \algldr & 2.2 & 33.8 & 297.3 \\
\hline
\multirow{4}{*}{5} & \algex & 3677.9 & 4305.1 & 4081.9 \\
 & \algcg & 11.5 & 200.1 & 2035.6 \\
 & \algheu & 3.1 & 998.7 & 3614.6 \\
 & \algldr & 2.7 & 40.3 & 439.7 \\
\end{tabular}
\caption{\selection: Average CPU time in seconds to generate instances, using (lenient) 3600 seconds time limit.}\label{tab:cpu}
\end{center}
\end{table}

The inferior results of \algex are explained by Table~\ref{tab:cpu}, which shows a significant reduction in the run time for all methods compared to \algex. In most cases, \algex cannot complete within the lime limit.
The best performing approach when $b = 1$ is \algheu, with an average run time that is 5\% of that for \algex when $n = 40$.
This remains the case when $b = 5$, where \algldr outperforms all other methods.
While the decomposition approach \algcg does not dominate any of the other approaches, it exhibits its best performance compared to \algheu as $b$ and $n$ increases.

Comparing Table \ref{tab:cpu-solve} with the average generation times in Table \ref{tab:cpu}, it is observed that while \algldr is the fastest method for solving \mro, it is the worst method for producing hard robust optimization instances.
Also, the approximation methods of \algcg and \algheu, while faster than \algex, are able to produce hard robust optimization instances.
This demonstrates a clear advantage to the iterative methods of the \mro, in particular the use of relaxation and heuristic methods.

Considering the iterative methods, \algex, \algcg and \algheu, the exact approach generates the hardest instances when it is capable of solving \mro to optimality.
When \algex can not solve \mro to optimality, then \algheu produces the most difficult instances.
Interestingly, when $n = 40$ and $b = 2, 5$ the average run times of the instances generated by \algheu is twice as large as those generated by \algex.
While \algcg does not produce instances harder than \algheu, they are harder than those produced by \algex when \mro is not solved to optimality.

Figure~\ref{fig-a} gives a more detailed box plot comparison for the case $n=N=40$ and $b=5$, i.e., for the hardest instances. Note the logarithmic vertical axis. The highest observed computation time for {\algru} was 6.59 seconds, while {\algheu} achieved a maximum of 61.03 seconds. The Wilcoxon signed-rank test confirms that {\algheu} produces harder instances than {\algru} with $p<0.001$.

\begin{figure}[htbp]
\begin{center}
\includegraphics[width=.5\textwidth]{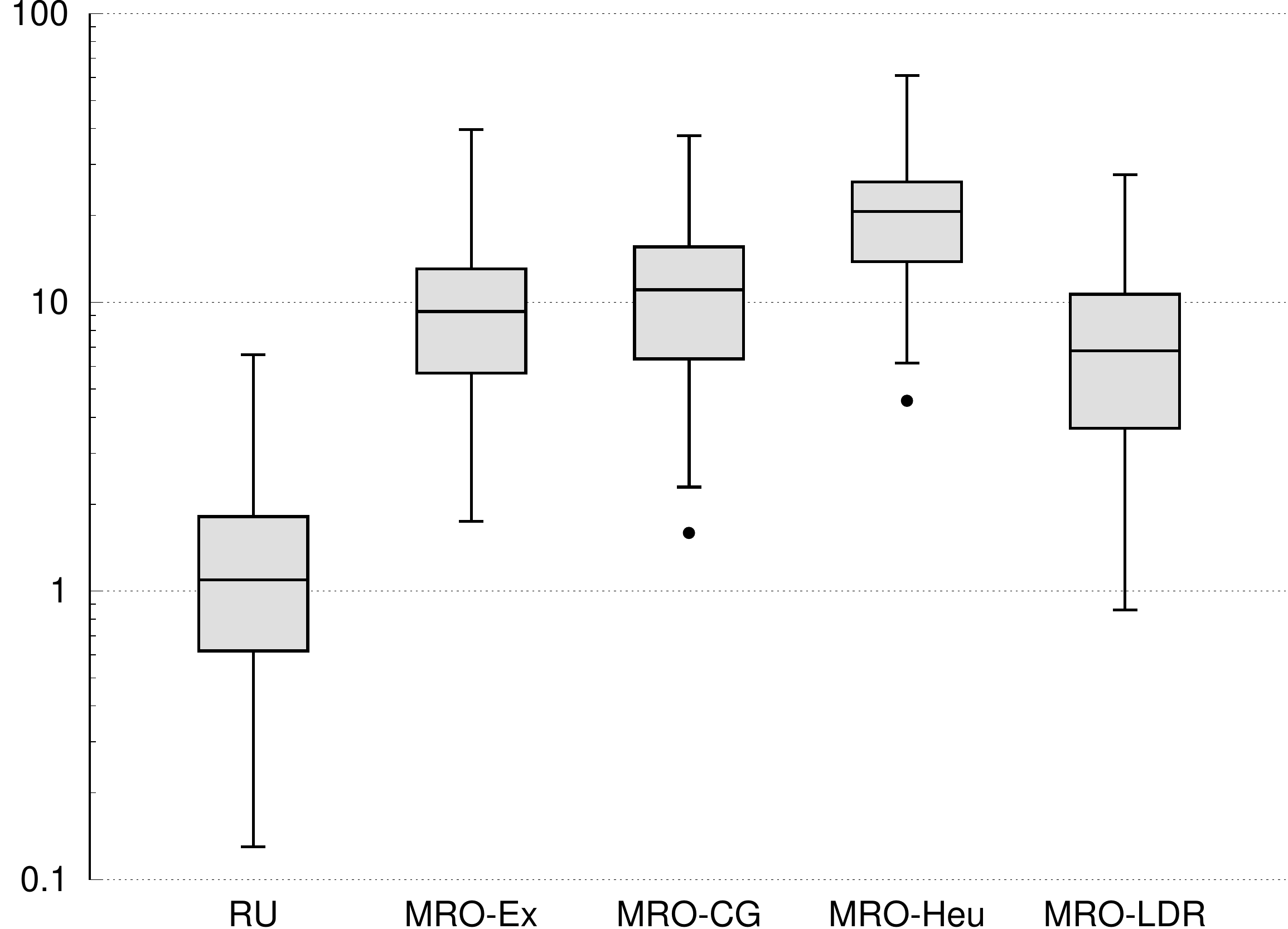}
\caption{\selection: Box plot of solution times (in seconds) for instances generated with $b=5$ and $n=N=40$.\label{fig-a}}
\end{center}
\end{figure}

\subsection{Large-Scale Instance Generation for \selection}

The ability of the \mro, especially the iterative methods, is demonstrated in Section \ref{sec:hardInstancesSelection} to produce instances significantly harder than randomly generated instances.
However, Table \ref{tab:cpu} shows that the generation time is a major limitation to the proposed approaches.
Specifically, as the instance size grows, the ability of \algex and \algheu to solve the \mro decreases.
Thus, it is necessary to employ alternative methods when generating hard robust optimization instances for larger problems.

Using the approach proposed in Section \ref{sec:large}---solving the inner robust optimization problem heuristically---hard instances to larger \selection problems can be generated.
In particular, we use a heuristic to solve the robust \selection problem within the alternating heuristic, labeled as \algls.
The results from using \algls, compared to \algru, for different numbers of items $n$ are presented in Table~\ref{tab:ls1}.
We show the average time for the instances that were solved to optimality and the number of instances which can be solved within the time limit of one hour.

\begin{table}[htb]
\begin{center}
\begin{tabular}{r|rrrr}
$n$ & \multicolumn{2}{c}{\algru} & \multicolumn{2}{c}{\algls} \\
\hline
20 & 0.02 & (100) & 0.06 & (100) \\
30 & 0.16 & (100) & 1.21 & (100) \\
40 & 1.72 & (100)& 25.45 & (100) \\
50 & 17.83 & (100) & 699.46 & (100) \\
60 & 163.65 & (100) & 1749.20 & (17) \\
70 & 1220.19 & (86) & - & (0)
\end{tabular}
\caption{\selection: Average CPU solving time in seconds for instances that were solved to optimality (number of instances that were solved to optimality).}\label{tab:ls1}
\end{center}
\end{table}

An initial observation from Table~\ref{tab:ls1} is that \algls is able to produce instances that are an order of magnitude harder than \algru, even for the smallest instances.
Comparing these results with Section \ref{sec:hardInstancesSelection}, it is clear that solving the robust optimization problem heuristically has advantages for the \mro as opposed to solving it exactly in \algheu.
Table \ref{tab:ls1} also demonstrates that as the size of the instances increases, many more of the instances produced by \algls cannot be solved within one hour decreases, compared to those produced by \algru.
Specifically, when $n = 60$, only 17 of the instances generated by \algls could be solved compared to 100 for the randomly generated instances.
This drops to 0 for the \algls instances when $n$ is increased to $70$.

\begin{figure}[htb]
\begin{center}
\includegraphics[width=.4\textwidth]{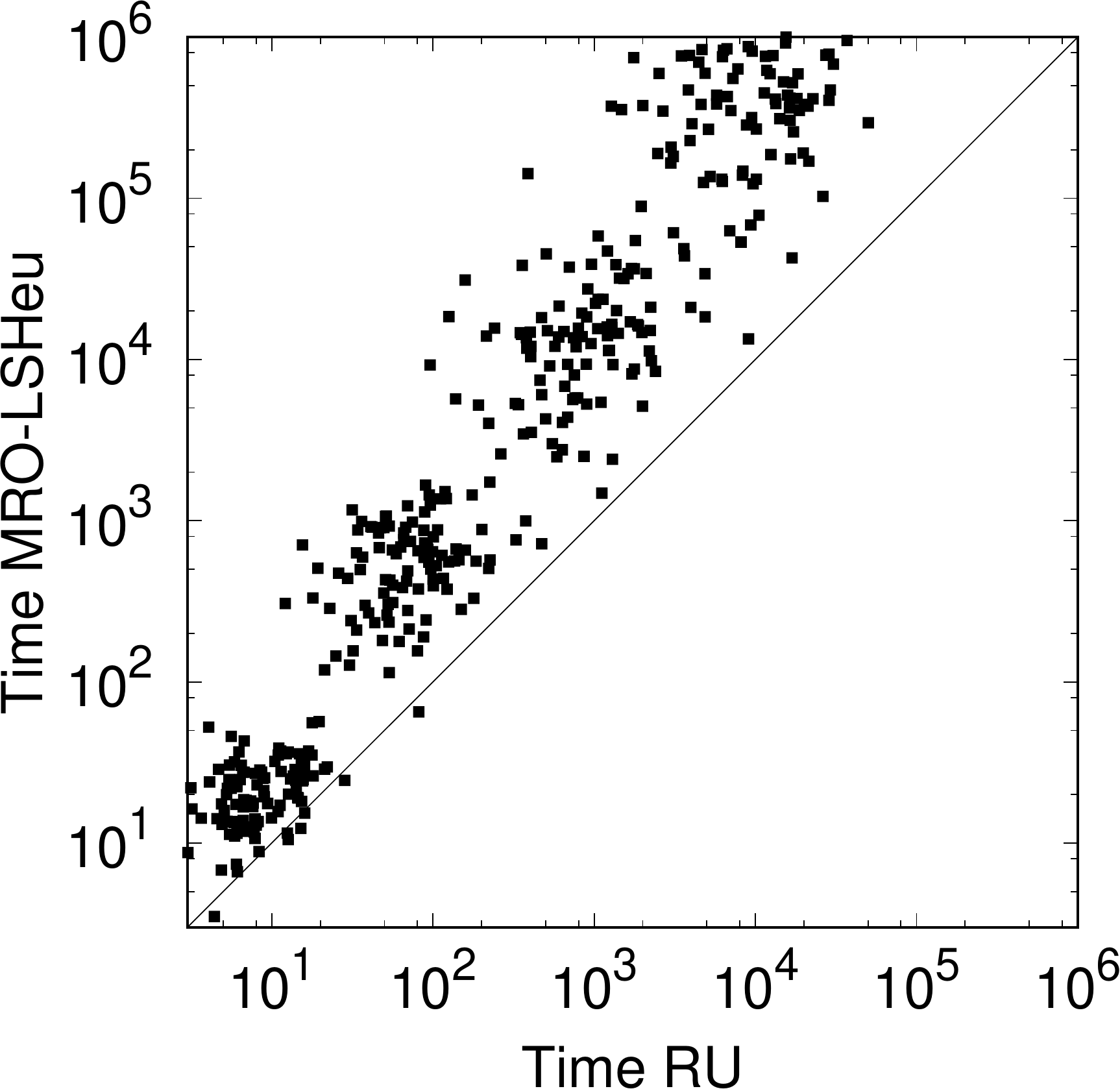}
\caption{\selection: Comparison of solution times per instance for $n=20$ to $n=50$.\label{fig-ls}}
\end{center}
\end{figure}

To better visualize this result, Figure~\ref{fig-ls} compares the run times required to solve the instances generated by \algru (horizontal axis) and \algls (vertical axis).
Note that in this figure the axes are scaled logarithmically.
Every point corresponds to one instance for $n=20$ to $n=50$ (problem sizes where all instances were solved to optimality).
A point on the diagonal means that a randomly generated instance remained as hard after applying \algls to increase its difficulty.
The clusters of dots are related to the values of $n$ and appear to display a linear relationship as $n$ increases.
Since this figure is displayed using double logarithmic axes, this linear relationship represents an exponential increase in the run times after applying \algls to produce hard instances.
Thus, \algls is capable of achieving significantly harder instances that \algru for large problem sizes of \selection.

We complement these observations with Table~\ref{tab:ls2}, where the average increase in solution time of \algls instances over the original \algru is shown. For $n=60$ and $n=70$, the true computation time is not always known. In these cases, we used 3600 seconds, but the actual increase will be higher than the numbers indicate. We see that with increasing instance size, our instance generation approach becomes more powerful, with an increase in hardness by a factor of up to 440.

\begin{table}[htbp]
\begin{center}
\begin{tabular}{r|rr}
$n$ & Avg & Max \\
\hline
20 & 2.8 & 12.9 \\
30 & 9.9 & 45.6 \\
40 & 23.1 & 195.9 \\
50 & 73.8 & 440.7 \\
60 & $>46.1$ & $>406.5$ \\
70 & $>4.75$ & $>44.3$
\end{tabular}
\caption{\selection: Average and maximum time increase for each instance using \algls. When not solved to optimality, counted as time limit.}\label{tab:ls2}
\end{center}
\end{table}

Finally, we also present the time to generate these instances in Table~\ref{tab:ls3}. On average, it takes less than 20 seconds to generate hard instances for $n=70$, which is a small fraction of the time needed to solve the resulting problems. This demonstrates that the proposed heuristic is an efficient tool to generate larger and harder instances, overcoming the limitations of the exact approach, and scaling well beyond $n=70$.

\begin{table}[htbp]
\begin{center}
\begin{tabular}{r|rrrrrr}
$n$ & 20 & 30 & 40 & 50 & 60 & 70 \\
\hline
Time & 0.1 & 0.3 & 0.9 & 2.0 & 4.4 & 19.4
\end{tabular}
\caption{\selection: Average time to generate instances using \algls.}\label{tab:ls3}
\end{center}
\end{table}

\subsection{Hard Instance Generation for \tsp}

We turn our attention to the \tsp for the next set of experimental results to assess whether the proposed methods can be applied to generate hard instances to more computationally difficult robust problems.
Given the dominance of the \algheu approach for the \selection problem, these experiments will only evaluate \algru, \algex and \algheu.
Also, for the larger problem sizes, the experiments will compare the difficulty of the instances generated by \algru and \algls.

The results of the first experiment, generating hard instances by \algex and \algheu with different values of $n=N^2$ and $b$, are presented in Table~\ref{tab:tsp1}.
Similar to the results in Section \ref{sec:hardInstancesSelection}, both \algex and \algheu produce instances that are significantly harder than \algru.
Also, of the considered algorithms \algheu is shown to generate the hardest instances.

\begin{table}[htbp]
\begin{center}
\begin{tabular}{r|r|rrr}
& & \multicolumn{3}{c}{$n=N^2$} \\
Budget & Method & 100 & 144 & 196 \\
\hline
\multirow{2}{*}{2} & \algex & 0.4 & 5.0 & 43.7 \\
 & \algheu & 0.5 & 6.7& 78.1 \\
\hline
 \multirow{2}{*}{5} & \algex & 1.4 & 10.7 & 52.2 \\
 & \algheu & 1.5 & 14.6  & 103.0 \\
 \hline & \algru & 0.2 & 1.1 & 9.0 \\
 \end{tabular}
\caption{\tsp: Average CPU time in second when solving the generated instances.}\label{tab:tsp1}
\end{center}
\end{table}

We find that with a larger budget, it is possible to increase the hardness considerably. While randomly generated instances for the 14-city \tsp take 9 seconds on average to solve, the instances hardened by using \algheu take on average 103 seconds to solve. The increase in hardness is further demonstrated by Table~\ref{tab:tsp2}, which shows the relative increase in computation time, averaged over all instances. We find the instances generated by \algheu exhibit a factor of 25 for $n=196$ longer than the randomly generated instances on average. Further, \algheu generates instances that take up to 230 times longer to solve than those originally produced by \algru.

\begin{table}[htbp]
\begin{center}
\begin{tabular}{r|r|rrrrrr}
& & \multicolumn{6}{c}{$n=N^2$} \\
Budget & Method & \multicolumn{2}{c}{100} & \multicolumn{2}{c}{144} & \multicolumn{2}{c}{196} \\
\hline
\multirow{2}{*}{2} & \algex & 2.8 & (5.3)  & 5.3 & (9.6) & 6.8 & (25.8)  \\
 & \algheu & 3.0 & (6.1) & 6.8 & (11.7) & 11.9 & (28.1) \\
\hline
 \multirow{2}{*}{5} & \algex & 10.0 & (24.1)  & 14.2 & (65.5) & 12.1 & (169.0) \\
 & \algheu & 11.0 & (27.2)  & 22.2 & (118.0)  & 25.0 & (232.7)
\end{tabular}
\caption{\tsp: Average (maximum) time increase relative to \algru for each set of instances.}\label{tab:tsp2}
\end{center}
\end{table}

\begin{table}[bp]
\begin{center}
\begin{tabular}{r|r|rrr}
& & \multicolumn{3}{c}{$n=N^2$} \\
 Budget & Method & 100 & 144 & 196 \\
\hline
\multirow{2}{*}{2} & \algex & 6.3  & 149.1 & 1327.9  \\
 & \algheu & 4.6  & 134.2 & 2028.4 \\
\hline
 \multirow{2}{*}{5} & \algex & 954.6  & 3170.0 & 3475.6 \\
 & \algheu & 143.1 & 2365.1  & 3507.0
\end{tabular}
\caption{\tsp: Average CPU time in seconds to generate instances, using (lenient) 3600 seconds time limit.}\label{tab:tsp3}
\end{center}
\end{table}

Similar to the results for \selection, the generation of hard instances for the \tsp is time consuming.
Table~\ref{tab:tsp3} shows that as budget $b$ and the number of edges $n$ increases, the run time required to generate hard instance also increases.
In fact, it is difficult to solve the \mro to optimality for any problems with more than 14 nodes in the underlying graph.
While the increase in difficulty for these smaller instances should be sufficient, if hard instances to larger problems are desired, then Table~\ref{tab:tsp3} highlights that \algex and \algheu are not satisfactory algorithms.
This shows the need for heuristic approaches for solving the inner robust optimization problem of the \mro, as proposed in Section \ref{sec:large}.

The ability of \algls to generate hard instances for larger problem sizes of the \tsp is shown in Table~\ref{tab:tsp4}.
In these experiments, instances were found for problems formulated with up to 20 nodes.
Also, in order to generate harder instances the budget $b$ was increased to 20.

\begin{table}[htbp]
\begin{center}
\begin{tabular}{r|rrrr}
$n$ & \multicolumn{2}{c}{\algru} & \multicolumn{2}{c}{\algls} \\
\hline
100 & 0.2 & (100) & 0.4 & (100)  \\
144 & 1.1 & (100) & 3.5 & (100) \\
196 & 9.3 & (100) & 66.0 & (100) \\
256 & 67.4 & (100) & 1040.7 & (99) \\
324 & 442.1 & (100) & 2512.8 & (2) \\
400 & 1624.2 & (60) & - & (0)
\end{tabular}
\caption{\tsp: Average CPU solving time in seconds for instances that were solved to optimality (number of instances that were solved to optimality).}\label{tab:tsp4}
\end{center}
\end{table}

The results presented in Table~\ref{tab:tsp4} show that \algls produces instances that are significantly harder than randomly generated instances.
When increasing the number of nodes to 16, it is the first time that an instance generated by \algls is unable to be solved within one hour.
Further increasing the number of nodes to 18 results in a significant increase in the run times to solve the hard robust optimization instances---only two instances can be solved within one hour.
These results further demonstrate the ability of \algls to produce hard instances for large robust optimization problems.

The relative increase in difficulty for the \algls instances over the \algru instance is presented in Table \ref{tab:tsp5}.
While these numbers are lower than for our previous methods for $n\le196$ (compare to Table~\ref{tab:tsp2}), the heuristic approach is shown to scale better for larger instances sizes.
It can be observed that \algls can produce instances that are a factor of up to 500 times more difficult to solve that the original randomly generated instance.
Since with $n\ge256$ many of the generated instances could not be solved to optimality, this decreases the factor increase.
Thus, it is expected that \algls is able to produce instances that are more than a factor of 500 times more difficulty than the \algru instances as the problem size increases.

\begin{table}[htbp]
\begin{center}
\begin{tabular}{r|rr}
$n$ & Avg & Max \\
\hline
100 & 3.1 & 11.1 \\
144 & 4.9 & 40.7 \\
196 & 14.8 & 151.8 \\
256 & $>41.8$ & $>504.3$ \\
324 & $>15.6$ & $>267.6$ \\
400 & $>2.7$ & $>20.5$
\end{tabular}
\caption{\tsp: Average and maximum time increase for each instance using \algls. When not solved to optimality, counted as time limit.}\label{tab:tsp5}
\end{center}
\end{table}

To finally highlight the potential of the \algls method, the instance generation times are presented in Table~\ref{tab:tsp6}.
For these results, the generation time limit was greatly restricted to 600 seconds.
Comparing Tables \ref{tab:tsp3} and \ref{tab:tsp6} it is clear that \algls is capable of generating harder instance than \algheu in much shorter run times.
This is a significant benefit to the approach heuristic approach for solving the \mro, since it enables the use of this technique to generate hard instances for difficult robust optimization problems.

\begin{table}[htbp]
\begin{center}
\begin{tabular}{r|rrrrrr}
$n$ & 100 & 144 & 196 & 256 & 324 & 400 \\
\hline
Time &0.1 & 24.4 & 392.7 & 504.2 & 552.2 & 570.2
\end{tabular}
\caption{\tsp: Average time to generate instances using \algls(600s limit).}\label{tab:tsp6}
\end{center}
\end{table}

\section{Conclusions}\label{sec:conclusions}

All relevant min-max robust combinatorial optimization problems are known to be NP-hard. But this theoretical complexity class does not necessarily indicate practical hardness. Indeed, randomly generated instances are usually not too challenging to solve with current off-the-shelf MIP solvers. Furthermore, algorithmic papers need to re-create such random instances every time, resulting not only in additional work for computational experiments, but also in another source of errors and incomparability of results.

The aim of this paper is to address these problems by introducing a way to generate problem instances which are considerably harder to solve than random instances. We first present exact and heuristic approaches for solving an optimization problem to generate hard robust problem instances. While these approaches require much computational effort to find hard instances, they are effective at increasing the difficulty over the randomly generated instances. To address the computational issues in generating hard robust instances, we propose the use of heuristic methods to solve the inner robust optimization problem within iterative solution algorithms. This latter approach is demonstrated to produce the most difficult instances within very short generation times. 

Our methods have been illustrated using the \selection problem and \tsp as examples, but they are widely applicable to other combinatorial problems. In further work, the foundation that is laid through these instances will be extended to a more comprehensive online problem library for robust optimization. All current instances are already available under \url{www.robust-optimization.com}. Also, the code used to generate the instances has been made available and can be easily extended to other underlying mathematical programming problems.

Three more approaches were tested, but not described in this paper. The first approach is based on constructing instances where the midpoint heuristic, a cornerstone of exact solution methods, performs badly. While an increase in hardness could be observed, results were not as promising as for \mro. Details on this setting and some computational results are provided in Appendix~\ref{sec:midpoint}. The second approach was based on the linear program developed in \cite{goerigk2018constructing} to construct a solution with small approximation guarantee for a given problem instance. We constructed instances by maximizing this approximation guarantee. The third approach was to train a neural network on a set of problem instance-solution time observations to predict hard instances. However, both approaches were not able to produce instances that were significantly harder than random instances. However, machine learning models seem to be a promising avenue for future research once hard instances have been generated by other models and thus become available for the training data.

\section*{Acknowledgments}

Stephen J. Maher is supported by the Engineering and Physical Sciences Research Council (EPSRC) grant EP/P003060/1.

\newcommand{\etalchar}[1]{$^{#1}$}

\newpage
\appendix

\section{Complexity of \mro}\label{app:complexity}

\begin{theorem}
The problem
\[ \max_{(\pmb{c}^1,\ldots,\pmb{c}^N)\in\cU} \min_{\pmb{x}\in\X} \max_{j\in[N]} \pmb{c}^j\pmb{x} \tag{\mro} \]
with a polyhedron $\cU\subseteq\mathbb{R}^{n \times N}$ is $\Sigma^p_2$-complete.
\end{theorem}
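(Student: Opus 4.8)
The plan is to establish the two directions separately, working with the decision version ``is $\max_{\pmb c\in\cU}\min_{\pmb x\in\X}\max_{j\in[N]}\pmb c^j\pmb x \ge t$?''. For membership in $\Sigma^p_2$, I would first put the question into prenex form: a ``yes'' instance is exactly one for which there exists $\pmb c=(\pmb c^1,\dots,\pmb c^N)\in\cU$ such that for every $\pmb x\in\X$ there is a scenario $j$ with $\pmb c^j\pmb x\ge t$. Since for fixed $\pmb c$ and $\pmb x$ the innermost ``$\exists j$'' only evaluates $N$ inner products and compares to $t$, the matrix is polynomial-time checkable and the quantifier structure is $\exists\pmb c\,\forall\pmb x$. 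The one point needing care is that the existential certificate $\pmb c$ must have polynomial encoding length. To see this, fix any assignment $\sigma:\X\to[N]$ of solutions to scenarios and note that $\{\pmb c\in\cU : \pmb c^{\sigma(\pmb x)}\pmb x\ge t\ \forall\pmb x\in\X\}$ is a rational polyhedron whose inequalities have coefficients drawn from the $0/1$ entries of the $\pmb x$ and from the description of $\cU$. By the standard fact that a feasible rational linear system has a solution whose bit-length is polynomial in the dimension and the largest coefficient size --- independent of the (here exponential) number of constraints, since a basic solution is pinned down by only $Nn$ tight constraints --- whenever the optimum is at least $t$ some such $\sigma$ yields a polynomial-size $\pmb c$ of value $\ge t$. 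Guessing this $\pmb c$ and running the $\forall\pmb x$ (co-NP) check then places the problem in $\Sigma^p_2$; here I assume, as the combinatorial setting provides, that membership $\pmb x\in\X$ is itself polynomial-time decidable.

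For hardness I would reduce from $\Sigma_2$-SAT: decide whether $\exists\pmb y\,\forall\pmb z\,\phi(\pmb y,\pmb z)$ holds with $\phi$ in $3$-DNF, a canonical $\Sigma^p_2$-complete problem. The three alternations of \mro line up with the quantifiers: the outer maximizer over $\pmb c\in\cU$ encodes the existential assignment to $\pmb y$, the inner combinatorial minimizer over $\pmb x\in\X$ encodes the universal assignment to $\pmb z$ (the minimizer ``searches for a falsifying $\pmb z$''), and the innermost $\max_{j\in[N]}$ over scenarios tests satisfaction of the DNF terms. Concretely, I would introduce coordinates of $\pmb c$ for the literals of $\pmb y$ and shape $\cU$ so that an optimal (maximizing) choice takes complementary $0/1$ values encoding a truth assignment; give $\pmb x$ one coordinate per variable of $\pmb z$, with the feasibility constraints of $\X$ forcing each $\pmb x$ to spell out a complete $\pmb z$-assignment; and create one scenario $j$ per DNF term so that $\pmb c^j\pmb x\ge t$ holds precisely when the chosen $(\pmb y,\pmb z)$ satisfies that term. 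With $t$ calibrated so that ``$\max_j\pmb c^j\pmb x\ge t$'' is equivalent to ``$\phi(\pmb y,\pmb z)$ is true'', the value of \mro is $\ge t$ iff the maximizer can fix $\pmb y$ so that every $\pmb z$ the minimizer produces still satisfies $\phi$, i.e. iff the $\Sigma_2$-SAT instance is a yes-instance.

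The hard part will be gadget correctness on the continuous side: because $\pmb c$ ranges over a polyhedron rather than a Boolean cube, I must ensure the maximizer cannot profit from fractional values failing to encode a genuine assignment to $\pmb y$. I would enforce this either by making $\cU$ so that a fractional $\pmb y$-coordinate strictly lowers the attainable objective (making integrality optimal), or by a rounding argument showing any fractional optimum can be pushed to a vertex encoding a Boolean assignment without decreasing the value. Dually, I must confirm that the minimizer's freedom in choosing $\pmb x\in\X$ matches exactly the set of $\pmb z$-assignments, so that no ``illegal'' $\pmb x$ lets the minimizer drive the objective below $t$ without corresponding to some $\pmb z$, and that the inner $\max_j$ cannot certify a term spuriously. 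Checking that these gadgets compose to give the stated equivalence, and that all numbers (the bounds defining $\cU$, the costs, and $t$) remain polynomially bounded, is the main technical obstacle; the surrounding bookkeeping is routine.
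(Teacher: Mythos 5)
Your skeleton coincides with the paper's: membership via the $\exists\pmb{c}\,\forall\pmb{x}$ quantifier structure, and hardness by reduction from $\exists\pmb{y}\,\forall\pmb{z}\,\phi(\pmb{y},\pmb{z})$ with $\phi$ in 3-DNF, aligning the outer maximization with $\exists\pmb{y}$, the inner combinatorial minimization with $\forall\pmb{z}$, and the innermost $\max_{j\in[N]}$ with the disjunction over terms. Your membership argument is in fact more careful than the paper's one-line assertion: the point that, for the scenario assignment realizing the optimum, a basic solution of the resulting rational linear system has polynomial bit-length despite the exponential number of constraints is exactly what the paper glosses over, and it is correct.

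The genuine gap is in the hardness direction: you state what the gadgets must achieve and correctly flag the two dangers (fractional $\pmb{c}$-values not encoding a Boolean $\pmb{y}$, and feasible $\pmb{x}$'s not corresponding to $\pmb{z}$-assignments), but you never construct the gadgets, and that construction is the substance of the theorem. The paper resolves both dangers concretely. The inner problem is \textsc{Representative Selection}: for each $\beta_j$ there is a pair of items $\{y^1_j,y^2_j\}$ of which exactly one must be chosen, so every feasible $\pmb{x}$ \emph{is} a $\pmb{z}$-assignment by construction, and the $\alpha$-items are forced singletons. The set $\cU$ couples all scenarios (one per DNF term) through shared auxiliary variables $d_j\in[-1,1]$: in scenario $i$ the cost of the $\alpha_j$-item is $a^i_jd_j$, where $a^i_j\in\{-1,0,1\}$ is the sign of $\alpha_j$ in term $i$, while the costs of $y^1_j,y^2_j$ are the fixed constants $\pm b^i_j$. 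Since each term has exactly three nonzero literals, a scenario value of $3$ is attainable only if its three literals are all satisfied, and a fractional $d_j$ merely disables every term containing $\alpha_j$, so rounding $d_j$ to $\pm1$ never decreases the optimum; the threshold is $t=3$. Note in particular that the existential assignment must be transmitted \emph{consistently} to all $N$ scenarios, which is why the theorem is stated for a single joint polyhedron $\cU\subseteq\R^{n\times N}$ rather than a product of per-scenario sets; your sketch is silent on this coupling, and without it the maximizer could encode a different $\pmb{y}$ for each term and the reduction would fail.
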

\begin{proof}
We first note that (\mro) is in class $\Sigma^p_2$, as for fixed $(\pmb{c}^1,\ldots,\pmb{c}^N)$, the resulting robust optimization problem is in NP.
We use a reduction from \textsc{2-Quantified 3-Dnf-Sat}, which is known to be $\Sigma^p_2$-complete \cite{stockmeyer1976polynomial}: Given a Boolean formula $\phi(\pmb{\alpha},\pmb{\beta})$ in DNF, where every clause consists of exactly three literals, is there a value for $\pmb{\alpha}=(\alpha_1,\ldots,\alpha_s)\in\{0,1\}^s$ such that for all $\pmb{\beta}=(\beta_1,\ldots,\beta_t)\in\{0,1\}^t$, the formula $\phi(\pmb{\alpha},\pmb{\beta})$ is true?

We build an instance of (\mro) using the \textsc{Representative Selection} problem \cite{kasperski2016robust} to define the underlying combinatorial problem. In this problem, we are given a partition of the items $\cup_{\ell\in[k]} T_\ell = [n]$ and a cost vector $\pmb{c}\in\mathbb{R}^n$. The task is to choose exactly one item of each $T_\ell$, such that the overall costs are minimized.

Our problem instance is built in the following way. 
Let $x_i\in\{0,1\}$ be the variable corresponding to variable $\alpha_i$, and let $y^1_i,y^2_i\in\{0,1\},$ correspond to $\beta_i$. Note that $n=s+2t$. We partition the items through sets $T_\ell$ for each $\ell\in[s]$, where $T_\ell=\{x_\ell\}$ (i.e., each $x_i$ must be equal to one), and $T'_\ell$ for each $\ell\in[t]$ with $T'_\ell=\{y^1_\ell,y^2_\ell\}$ (i.e, exactly one of $y^1_\ell$ and $y^2_\ell$ must be equal to one). Let us denote a clause of $\phi$ by
\[C_i=(a^i_1\alpha_1 \wedge a^i_2\alpha_2 \wedge \ldots \wedge a^i_s\alpha_s \wedge b^i_1\beta_1 \wedge b^i_2\beta_i \wedge \ldots \wedge b^i_t\beta_t) \] 
where $a^i_k$,$b^i_k$ denote the signs of the variables in $\{-1,0,1\}$ (exactly three signs are non-zero). Let $N$ clauses be given. We build a scenario for each clause. The corresponding polyhedron of possible scenarios is given as:
\begin{align*}
\cU = \Big\{ (\pmb{c}^1,\ldots,\pmb{c}^N)\ :\ & c^i(x_j) = a^i_j d_j & \forall i\in[N],j\in[s] \\
& d_j \in [-1,1] & \forall j\in[n]\\
& c^i(y^1_j) = b^i_j & \forall i\in[N],j\in[t] \\
& c^i(y^2_j) = -b^i_j & \forall i\in[N],j\in[t]\ \Big\}\hspace*{-3.5mm}
\end{align*}
where
\[ (\pmb{c}^1,\ldots,\pmb{c}^N) = 
\begin{pmatrix}
c^1(x_1) & c^2(x_1) & \ldots & c^N(x_1) \\
c^1(x_2) & c^2(x_2) & \ldots & c^N(x_2) \\
\vdots & \vdots & \ddots & \vdots \\
c^1(x_s) & c^2(x_s) & \ldots & c^N(x_s) \\
c^1(y^1_1) & c^2(y^1_1) & \ldots & c^N(y^1_1) \\
c^1(y^1_2) & c^2(y^1_2) & \ldots & c^N(y^1_2) \\
\vdots & \vdots & \ddots & \vdots \\
c^1(y^1_t) & c^2(y^2_t) & \ldots & c^N(y^2_t) \\
c^1(y^2_1) & c^2(y^2_1) & \ldots & c^N(y^2_1) \\
c^1(y^2_2) & c^2(y^2_2) & \ldots & c^N(y^2_2) \\
\vdots & \vdots & \ddots & \vdots \\
c^1(y^2_t) & c^2(y^2_t) & \ldots & c^N(y^2_t) \\
\end{pmatrix} \]
Note that the projection of $\cU$ onto $\mathbb{R}^{n \times N}$ is indeed a polyhedron.
We claim that there exists an optimal solution to our MRO instance with objective value at least 3 if and only if the \textsc{2-Quantified 3-Dnf-Sat} instance is true. Let us first assume that there exists $\pmb{\alpha}$ such that $\phi(\pmb{\alpha},\pmb{\beta})$ is true for all $\pmb{\beta}$. We construct $N$ scenarios by setting $d_i=1$ if $\alpha_i$ is true, and $d_i=-1$ otherwise. Then the resulting robust problem becomes
\begin{align*}
\min\ & z \\
\text{s.t. } & z \ge \sum_{j\in[s]} a^i_jd_j + \sum_{j\in[t]} b^i_j(y^1_j-y^2_j) & \forall i\in[N] \\
& y^1_j + y^2_j = 1 & \forall j\in[t] \\
& y^1_j,y^2_j\in\{0,1\} & \forall j\in[t]
\end{align*}
By construction, it follows that for every feasible solution $(\pmb{y}^1,\pmb{y}^2)$, the optimal value of $z$ is 3. Let us now assume that the objective value of the (\mro) is at least 3. Note that in this case, the objective value is exactly three, and we can assume that $d_j\in\{-1,1\}$ for all $i\in[s]$. Set $\alpha_j$ as true if and only if $d_j=1$. Then, it follows that for every possible value $\pmb{\beta}$, the formula $\phi$ is true, as the robust problem aims at finding values for $(\pmb{y}^1,\pmb{y}^2)$ such that all clauses are false.
\end{proof}

\section{Maximizing the Midpoint Objective Value}\label{sec:midpoint}

We now explore a different view on problem hardness. Instead of maximizing the objective value of the resulting optimal solution, which, as the discussion in Section~\ref{sec:1} has shown, is a complex optimization problem, we use the objective value of the midpoint solution as a proxy. The midpoint method is one of the most popular heuristics for min-max robust combinatorial optimization. It aggregates all scenarios into one average scenario and solves the resulting single-scenario problem, which is possible in polynomial time for some combinatorial problems (see Assumption~\ref{lpassumption}). It is known to give an $N$-approximation to the robust problem \cite{Aissi2009}, and has been the best known general method until recently \cite{chassein2017scenario}. Due to its simplicity, it is also a popular submethod for exact branch-and-bound approaches \cite{chassein2015new}.

The optimization problem to generate hard instances we consider here is therefore given as
\begin{equation}
  \max_{\pmb{c}^1,\ldots,\pmb{c}^N} \max_{i\in[N]} \pmb{c}^i \hat{\pmb{x}}\left(\frac{1}{N}\sum_{\ell\in[N]} \pmb{c}^\ell\right) \tag{\midpt}
  \label{eqn:maximisingMidpoint}
\end{equation}
where $\hat{\pmb{x}}(\pmb{c})$ denotes an optimal solution to scenario $\pmb{c}$.

To enable the use of general purpose mixed integer programming solvers, a reformulation of problem \eqref{eqn:maximisingMidpoint} is performed.
A common reformulation involves applying a linearization if the nominal problem can be written as a linear program under Assumption~\ref{lpassumption}, which is the case for \selection.
In the following, we present a reformulation of \eqref{eqn:maximisingMidpoint} when \selection is the nominal robust optimization problem.
To apply this linearization, we enforce that $\pmb{x}$ is an optimal solution to the midpoint scenario $\frac{1}{N}\sum_{\ell\in[N]} \pmb{c}^\ell$ by requiring the corresponding primal and dual objective values to be equal.
The resulting optimization problem is then
\begin{align}
\max\ &\sum_{i\in[N]} t_i \lambda_i \label{mcon1}\\
\text{s.t. } & t_i = \sum_{k\in[n]} c^i_k x_k & \forall i\in[N] \label{mcon2}\\
& \sum_{i\in[N]} \lambda_i = 1 \label{mcon3}\\
& \sum_{i\in[N]} \sum_{k\in[n]} c^i_k x_k = p\alpha - \sum_{k\in[n]} \beta_k \label{mcon4}\\
& \sum_{k\in[n]} x_k = p \label{mcon5}\\
& \alpha - \beta_k \le \sum_{i\in[N]} c^i_k & \forall k\in[n] \label{mcon6}\\
& \lambda_i \in\{0,1\} & \forall i\in[N] \label{mcon7}\\
& \pmb{c}^i \in\cU^i & \forall i\in[N] \label{mcon8}\\
& t_i \ge 0 & \forall i\in[N] \label{mcon9}\\
& x_k \in\{0,1\} & \forall k\in[n] \label{mcon10}\\
& \alpha \ge 0 \label{mcon11}\\
& \beta_k \ge 0 & \forall k\in[n] \label{mcon12}
\end{align}
Here, $t_i$ denotes the objective value of the midpoint solution in scenario $\pmb{c}^i$ (see Constraint~\eqref{mcon2}). The optimization problem maximizes the largest $t_i$ by choice variables $\lambda_i$ (see Objective~\eqref{mcon1} and Constraint~\eqref{mcon3}). Constraints~(\ref{mcon4}-\ref{mcon6}) ensure that $\pmb{x}$ is indeed the midpoint solution by enforcing primal and dual feasibility, and equality of primal and dual objective values. 

There are still nonlinearities between $t_i \lambda_i$ and $c^i_kx_k$. We linearize the first product using $q_i = t_i \lambda_i$ with $q_i \le t_i$ and $q_i \le M_i \lambda_i$, where $M_i=\sum_{k\in[n]} \overline{c}^i_k$ suffices. The second product is linearized using $r_{ik} = c^i_kx_k$ with $r_{ik} \le c^i_k$ and $r_{ik} \le \overline{c}^i_kx_k$.

We now compare this approach to \algex with a similar experimental setup as before.
For ease of exposition, the evaluation of the efficacy of the exact solution methods will be performed using \selection only. We refer to the results using the midpoint method as \algmid.

The average run times of the instances generated from \algex and \algmid are presented in Table~\ref{tab:exact-cpu-solve}. For comparison, the average run times to solve the randomly generated instances is also presented.

\begin{table}[htb]
\begin{center}
\begin{tabular}{rr|rrr}
& & \multicolumn{3}{c}{$n=N$} \\
 Budget & Method & 20 & 30 & 40 \\
 \hline
 \multirow{2}{*}{1} & \algex & 0.04 & 0.61 & 7.83 \\
 & \algmid & 0.03 & 0.15 & 1.62 \\
 \hline
 \multirow{2}{*}{2} & \algex & 0.06 & 0.92 & 7.49 \\
 & \algmid & 0.03 & 0.16 & 1.74 \\
 \hline
 \multirow{2}{*}{5} & \algex & 0.08 & 0.77 & 10.59 \\
 & \algmid & 0.03 & 0.23 & 2.90 \\
\hline
  & \algru & 0.03 & 0.13 & 1.43
\end{tabular}
\caption{Average CPU time in seconds when solving the random instances and the instances generated using the exact iterative method and the midpoint method.}\label{tab:exact-cpu-solve}
\end{center}
\end{table}

\begin{table}[b]
\begin{center}
\begin{tabular}{rr|rrr}
& & \multicolumn{3}{c}{$n=N$} \\
 Budget & Method & 20 & 30 & 40 \\
 \hline
 \multirow{2}{*}{1} & \algex & 1.7 & 207.0 & 3005.8 \\
  & \algmid & 1.1 & 15.2 & 49.2 \\ 
  \hline
 \multirow{2}{*}{2} & \algex & 49.3 & 3238.1 & 3796.0 \\
 & \algmid & 2.2 & 28.3 & 148.5 \\
 \hline
 \multirow{2}{*}{5} & \algex & 3677.9 & 4305.1 & 4081.9 \\
 & \algmid & 6.0 & 1047.4 & 1807.2 \\ 
\end{tabular}
\caption{Average CPU time in seconds to produce instances using the exact iterative method and the midpoint method. A (lenient) time limit of 3600 seconds was used.}\label{tab:exact-cpu}
\end{center}
\end{table}

The results presented in Table \ref{tab:exact-cpu-solve} show that while \algmid produces instances that are more difficult than random instances, the increase in difficulty is less than that achieved by \algex.
Given that \algmid is a more complex algorithm for generating problem instances than a random generator, the results presented in Table \ref{tab:exact-cpu-solve} suggest that \algmid is not a satisfactory method for instance generation.
This is further highlighted by the average computation times of \algmid presented in Table \ref{tab:exact-cpu}.
These results demonstrate that maximizing the minimum solution objective is a better proxy for instance \emph{hardness} than maximizing the midpoint objective.

\end{document}